\documentclass[12pt]{amsart}
\usepackage{amscd,amsmath,amsthm,amssymb,amstext}
\usepackage{mathtools}
\usepackage{mathrsfs}
\usepackage{comment}
\usepackage[all,tips]{xy}
\usepackage{color}
\usepackage{circuitikz}
\usepackage{caption}
\usepackage{enumitem} 

\definecolor{cadmiumgreen}{rgb}{0.0, 0.42, 0.24}
\usepackage[
colorlinks, citecolor=cadmiumgreen,
pagebackref,
pdfauthor={Farbod Shokrieh, Cameron Wright}, 
pdftitle={Torsor structures on spanning trees},
pdfstartview ={FitV},
]{hyperref}

\usepackage[
alphabetic,
backrefs,
msc-links,
nobysame,
lite,
]{amsrefs} 

\usepackage{tikz, float} 
\usetikzlibrary {positioning, arrows,arrows.meta}

\usetikzlibrary{calc,decorations.markings}
\usetikzlibrary{shapes,snakes}

\usepackage[left=3.5cm,top=3.5cm,right=3.5cm]{geometry}
\setlength\marginparwidth{45pt}

\def\Div{{\rm Div}}
\def\Pic{{\rm Pic}}
\def\Prin{{\rm Prin}}

\def\A{{\mathcal A}}

\def\opn#1#2{\def#1{\operatorname{#2}}}
\opn\depth{depth} 
\opn\codim{codim}
\opn\ini{in} 
\opn\LM{LM}
\opn\LC{LC}
\opn\NF{NF}
\opn\Merge{Merge}
\opn\sgn{sgn}
\opn\div{div} 
\opn\Div{Div} 
\opn\Pic{Pic}
\opn\Prin{Prin}
\opn\Del{Del}
\opn\op{op}
\opn\ends{ends}
\opn\indeg{indeg} 
\opn\sign{sign}
\opn\outdeg{outdeg}
\opn\red{red}
\opn\Spec{Spec} 
\opn\Supp{Supp} 
\opn\supp{supp} 
\opn\Ker{Ker} 
\opn\Coker{Coker} 
\opn\Hom{Hom}
\opn\Tor{Tor} 
\opn\id{id}
\opn\span{span}
\opn\Image{Image}
\opn\con{conv} 
\opn\relint{rel.int} 
\opn\vol{vol}
\opn\val{val}
\opn\Zh{Zh}
\opn\Ber{Ber}
\opn\Vor{Vor}
\opn\Vol{Vol}
\opn\Covol{Covol}
\opn\Jac{Jac}
\opn\Dir{Dir}
\opn\can{can}
\opn\syz{{\rm syz}}
\opn\spoly{{\rm spoly}}
\opn\LM{{\rm LM}}
\opn\lm{{\rm lm}}
\opn\lcm{{\rm lcm}} 
\opn\A{\mathcal A}
\opn\dist{dist}
\opn\pd{pd}
\opn\en{en}
\opn\PL{PL}
\opn\dmeasz{DMeas_0}
\opn\dmeas{DMeas}
\opn\T{T}
\opn\circu{circ}
\opn\cocirc{cocirc}
\opn\Proj{Proj}

\def\Implies{\ifmmode\Longrightarrow \else
        \unskip${}\Longrightarrow{}$\ignorespaces\fi}
\def\implies{\ifmmode\Rightarrow \else
        \unskip${}\Rightarrow{}$\ignorespaces\fi}
\def\iff{\ifmmode\Longleftrightarrow \else
        \unskip${}\Longleftrightarrow{}$\ignorespaces\fi}

\newtheorem{Theorem}{Theorem}[section]
\newtheorem{Lemma}[Theorem]{Lemma}
\newtheorem{Corollary}[Theorem]{Corollary}
\newtheorem{Proposition}[Theorem]{Proposition}

\newtheorem{Conjecture}[Theorem]{Conjecture}
\newtheorem*{Conj}{Conjecture}

\theoremstyle{remark}

\theoremstyle{definition}
\newtheorem{Example}[Theorem]{Example}
\newtheorem{Definition}[Theorem]{Definition}

\def\qed{\ifhmode\textqed\fi
      \ifmmode\ifinner\quad\qedsymbol\else\dispqed\fi\fi}
\def\textqed{\unskip\nobreak\penalty50
       \hskip2em\hbox{}\nobreak\hfil\qedsymbol
       \parfillskip=0pt \finalhyphendemerits=0}
\def\dispqed{\rlap{\qquad\qedsymbol}}

\numberwithin{equation}{section}

\tikzstyle{Cwhite}=[scale = .8,circle, fill = white, minimum size=3mm] 
\tikzstyle{Cgray}=[scale = .4,circle, fill = gray, minimum size=3mm] 
\tikzstyle{Cblack2}=[scale = .4,circle, fill = black, minimum size=5mm] 
\tikzstyle{Cblack}=[scale = .7,circle, fill = black, minimum size=3mm]
\tikzstyle{C0}=[scale = .9,circle, fill = black!0, inner sep = 0pt, minimum size=3mm]
\tikzstyle{C1}=[scale = .7,circle, fill = black!0, inner sep = 0pt, minimum size=3mm]
\tikzstyle{Cred}=[scale = .4,circle, fill = red, minimum size=3mm]

\ctikzset{bipoles/resistor/height=0.15}
\ctikzset{bipoles/resistor/width=0.4}

\pagecolor{white}

\begin{document}

\title{Torsor structures on spanning trees}

\author{Farbod Shokrieh}
\email{\href{mailto:farbod@uw.edu}{farbod@uw.edu}}

\author{Cameron Wright}
\email{\href{mailto:wrightc8@uw.edu}{wrightc8@uw.edu}}



\date{\today}

\begin{abstract}
We study two actions of the (degree 0) Picard group on the set of the spanning trees of a finite ribbon graph. It is known that these two actions, denoted $\beta_q$ and $\rho_q$ respectively, are independent of the base vertex $q$  if and only if the ribbon graph is planar. Baker and Wang conjectured that in a nonplanar ribbon graph without multiple edges there always exists a vertex $q$ for which $\rho_q\neq\beta_q$. We prove the conjecture and extend it to a class of ribbon graphs with multiple edges. We also give explicit examples exploring the relationship between the two torsor structures in the nonplanar case.
\end{abstract}

\maketitle

\setcounter{tocdepth}{1}
\tableofcontents

\section{Introduction} \label{sec:Intro}
\renewcommand*{\theTheorem}{\Alph{Theorem}}

The number of spanning trees of a finite loopless graph $G$ is equal to the determinant of the reduced combinatorial Laplacian of the graph $G$; this is the celebrated Kirchoff matrix-tree theorem. In addition to counting spanning trees of the graph, this determinant also counts the number of elements of a certain group, known as the (degree 0) \textit{Picard group}, critical group, or sandpile group of the graph. This group, denoted $\Pic^0(G)$, is defined as the group of chip-firing equivalence classes on the collection of divisors on the graph under addition. The relationship between $\Pic^0(G)$ and the collection $\mathcal{T}(G)$ of spanning trees of the graph has been the subject of research in recent years. \\

In addition to equality in cardinality of the set $\mathcal{T}(G)$ and the Picard group $\Pic^0(G)$, it is known that the set $\mathcal{T}(G)$ admits two distinct torsor structures for the Picard group; the Picard group of a graph acts simply and transitively on the set $\mathcal{T}(G)$. Two such actions, namely the Bernardi action $\beta_q$ and the rotor-routing action $\rho_q$, have been the subject of much study in recent years \cite{ccg, bw} (see also \cite{HP, genus}). To define these torsors, we work with graphs endowed with a \textit{ribbon structure}, that is a cyclic ordering of the edges incident to each vertex. By giving a ribbon structure to a finite graph, we determine a surface in which the graph can be considered as embedded; for this reason, ribbon structures were originally termed `combinatorial embeddings' of graphs. 

After fixing a ribbon structure on a graph $G$, the Bernardi torsor $\beta_q$ and the rotor-routing torsor $\rho_q$ on $\mathcal{T}(G)$ rely on the choice of a base vertex vertex $q$ of the graph. It was proved by Chan, Church, and Grochow that the torsor $\rho_q$ is independent of the initial data $q$ if and only if the ribbon graph is planar \cite{ccg}. Separately, Baker and Wang proved that the Bernardi torsor $\beta_q$ is also independent of the initial data $q$ if and only if the ribbon graph is planar. Moreover, Baker and Wang proved in \cite{bw} that in the case of a planar ribbon graph, the torsor structures coincide; the Bernardi and rotor-routing processes produce the same simply transitive group action. Baker and Wang include the following conjecture:

\begin{Conj}[Baker and Wang \cite{bw}]
Given a nonplanar ribbon graph $G$ with no multiple edges and no loops, there exists a vertex $q$ for which $\rho_q\neq \beta_q$. 
\end{Conj}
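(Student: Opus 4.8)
The plan is to exhibit, for every nonplanar ribbon graph $G$ with no loops and no multiple edges, an explicit triple $(q,v,T)$ --- a base vertex, a vertex $v$, and a spanning tree $T$ --- at which the rotor-routing and Bernardi actions of the generator $[(v)-(q)]\in\Pic^0(G)$ disagree. This suffices: both $\rho_q$ and $\beta_q$ are simply transitive $\Pic^0(G)$-actions on $\mathcal{T}(G)$, and in either torsor the action of a sum is the composite of the actions of the summands (and hence of inverses), so $\rho_q=\beta_q$ as soon as the two actions agree on the generating set $\{[(v)-(q)]:v\in V(G)\}$ of $\Pic^0(G)$; producing a single witness of disagreement therefore yields $\rho_q\neq\beta_q$. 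For these generators both sides admit explicit combinatorial descriptions --- rotor-routing a single chip from $v$ to the sink $q$ through the configuration $T$, versus the Bernardi process associated with the pair $(q,v)$ --- so the task reduces to comparing two concrete procedures.

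The next step is to locate, in any nonplanar $G$, a configuration where these procedures can be forced apart. Since $G$ is nonplanar its ribbon structure has genus at least $1$, so the associated surface carries an essential cycle $C$ through some vertex $q$; the expectation is that when a chip is routed along a tree path following $C$, the Bernardi walk and the rotor trajectory ``wind around the handle'' differently and so close up to different spanning trees. To make this tractable I would contract a spanning tree of $G$ adapted to $C$ --- contracting a non-loop edge does not change the genus and is compatible with both torsor structures --- thereby reducing the comparison to a small ribbon graph, typically one with parallel edges, supported near $C$ and the handle it detects. I would then verify the disagreement in that small model by direct computation and pull it back. This is also the mechanism behind the stated extension to a class of ribbon graphs with multiple edges: the small models that arise are exactly such graphs, and the explicit computation there is the content of the extension.

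A separate reduction step is needed to guarantee that the small model is genuinely nonplanar and that passing to it has not wasted the simplicity of $G$: one must check that the contractions used preserve the genus together with enough of the rotor-routing and Bernardi data --- the trees, the generator $[(v)-(q)]$, and the two procedures' outputs --- that a disagreement in the small model forces one in $G$. Since a simple loopless graph can only acquire multiple edges under contraction, carrying this bookkeeping through is precisely what makes it necessary to prove the multiple-edge version first and then specialize.

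The main obstacle is the explicit comparison at the heart of the second step. Rotor-routing is a global, iterative process and the Bernardi process traces a long walk around the ribbon surface, so pinning down a particular spanning tree $T$ and a particular routing along an essential cycle for which the two closed-up trees provably differ --- and ruling out an accidental coincidence that would make them equal --- is where the genuine difficulty lies. A secondary difficulty is ensuring that the reduction to a small multi-edge model is faithful, i.e.\ that the local data governing both processes near $C$ survives the contractions intact; getting this right is what makes the passage between the general simple case and the computable multi-edge models rigorous.
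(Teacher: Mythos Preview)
Your proposal is a strategy sketch rather than a proof, and the strategy has a genuine gap. The central move is to contract edges of $G$ down to a small ribbon graph with multiple edges, verify the disagreement there by hand, and pull it back. But the paper itself exhibits a nonplanar ribbon graph with multiple edges --- the rounded bowtie in Figure~\ref{fig:roundedbowtie} --- for which $\rho_q=\beta_q$ at \emph{every} vertex $q$. So there is no guarantee that your small model will display any disagreement at all: the contraction could land you precisely on such a graph, and then there is nothing to pull back. Your phrase ``the explicit computation there is the content of the extension'' misreads the extension: the paper does not prove the conjecture for arbitrary multigraphs, only for those admitting a \emph{proper witness pair}, and the rounded bowtie shows this hypothesis is not vacuous. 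Relatedly, the assertion that contracting a non-loop edge ``is compatible with both torsor structures'' is doing a lot of unearned work; even granting some functoriality, you would need that a \emph{disagreement} in the quotient forces one in $G$, and you have not argued this.

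The paper's route is quite different and avoids contraction entirely. It works directly in $G$ with a nonseparating cycle $C$ and a witness path $P$, and splits into two cases according to whether $(C,P)$ can be chosen \emph{proper} (endpoints of $P$ distinct on $C$). In the proper case (Theorem~\ref{thm:propwit}), after a combinatorial reduction to the situation where the two $C$-edges at an endpoint $z$ of $P$ are consecutive in the ribbon order, one takes $q$ to be the neighbour of $z$ along $C$, builds a specific spanning tree $T$ containing $C\setminus\{z,q\}$ and all but one edge of $P$, and observes that rotor-routing $[(z)-(q)]$ moves a single rotor, so $T'=\rho_q([(z)-(q)],T)$ differs from $T$ in one edge. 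One then checks that both $\beta_{(q,e)}(T')$ and $(z)-(q)+\beta_{(q,e)}(T)$ are break divisors for $T'$, but the Bernardi tours of $T$ and $T'$ reach the deleted edge of $P$ from opposite ends, so these break divisors differ and hence represent different classes in $\Pic^g(G)$. In the simple-graph case without a proper witness pair (Theorem~\ref{thm:nopropwit}), a \emph{tight} witness pair is used instead, and a unicycle argument (Lemma~\ref{lem:unicycles}) controls how far the rotors can move, again forcing the two break divisors to differ at $z$. The key idea you are missing is this direct comparison of break divisors for a carefully chosen $T$ near a witness pair; no passage to a smaller graph is involved.
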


We resolve this conjecture and extend the result to a class of graphs having multiple edges but no loops. In particular, it is proven here that any nonplanar graph with or without multiple edges admits a vertex $q$ for which $\rho_q\neq\beta_q$ when endowed with any ribbon structure. The criterion for the presence of such a vertex $q$ in a nonplanar ribbon graph with multiple edges is the presence of what we call a \textit{proper witness pair}. In Theorem \ref{thm:propwit}, we prove that the presence of a proper witness pair in a nonplanar ribbon graph is sufficient to ensure the presence of a vertex $q$ for which $\rho_q\neq \beta_q$. Further, we show in Proposition \ref{prop:nonplanar} that any nonplanar graph, when endowed with any ribbon structure, must contain a proper witness pair. In particular, any nonplanar graph endowed with any ribbon structure contains a vertex at which the torsor structures disagree. We prove Theorem \ref{thm:nopropwit}, the Baker-Wang conjecture for loopless nonplanar ribbon graphs with no multiple edges admitting no proper witness pair. Finally, we consider two examples and some associated computations; in particular we exhibit a graph having distinct vertices $p$ and $q$ such that $\rho_p=\beta_p$ while $\rho_q\neq\beta_q$.  \\

In an independent work, Changxin Ding in \cite{ding} also studies the Baker-Wang conjecture in the case where no multiple edges are allowed. Our approach to the conjecture appears to be different; in particular, we avoid the use of `$H$-decompositions' for planar graphs and we work with ribbon graphs having multiple edges.

\medskip

\renewcommand*{\theTheorem}{\arabic{section}.\arabic{Theorem}}

\section{Background and terminology} \label{sec:background}

By a \textit{graph} we mean a finite connected multigraph with no loops; for a graph $G$, we denote by $V(G)$ its vertex set and by $E(G)$ its edge multiset. Unless otherwise specified, multiple parallel edges are allowed. A \textit{ribbon graph} is a finite graph together with a cyclic ordering of the edges around each vertex, referred to as a \textit{ribbon structure}. Note that a ribbon structure on a graph induces a natural ribbon structure on any graph minor. Given two edges $e_0,e_1$ incident to a given vertex $v$, we will write $e_0\prec e_1$ if $e_1$ immediately succeeds $e_0$ in the order around $v$. If a vertex $v$ has two edges $e_0$ and $e_1$, the \textit{interval} $[e_0,e_1]$ between $e_0$ and $e_1$ is the collection of edges $\{e_0,f_1,\ldots,f_k,e_1\}$, where
	\[e_0\prec f_1 \prec \cdots \prec f_k\prec e_1.\]
The \textit{length} of an interval $[e_0,e_1]$ is its cardinality as a set.

A finite ribbon graph is equivalent to an embedding of the underlying graph into a closed oriented surface such that the ribbon structure agrees with the orientation of the surface (see, e.g. \cite[Theorem~3.7]{Tho}). In all figures in this paper, the ribbon structure is assumed to be given by the counterclockwise orientation of the underlying surface. 

A \textit{path} $P$ in a ribbon graph $G$ is the image of a mapping $P_k\to G$ of the length-$k$ path into $G$ that is injective on edges but not necessarily on vertices. A \textit{cycle} in $G$ is the image of a mapping $C_k\to G$ of the length-$k$ cycle into $G$ that is injective on both edges and vertices. Let $C$ be a cycle and endow $C$ with an orientation. A vertex $v$ of $C$ is incident to precisely two edges of $C$, $e_{\text{in}}$ and $e_{\text{out}}$, where the labeling of these edges agrees with the orientation of $C$. An edge $e$ incident to $v$ is said to be to the $\textit{left}$ of $C$ if $e\in[e_{\text{out}},e_{\text{in}}]$ and to the $\textit{right}$ of $C$ if $e\in[e_{\text{in}},e_{\text{out}}]$. \\

Given a group $\Gamma$, we consider a group action of $\Gamma$ on a set $X$ as a function
	\[\Gamma\times X \longrightarrow X\]
We say that a group action is \textit{regular} if this mapping is an isomorphism of sets. Stated differently, an action of $\Gamma$ on $X$ is regular if $\Gamma$ acts simply and transitively on $X$. If the set $X=\{1,\ldots,N\}$, we can consider a group action as a homomorphism from $\Gamma$ to the symmetric group on $N$ symbols:
	\[\Gamma\longrightarrow S_N\]
All multiplication of permutations will be from right to left, so that for example we have $(123)(34) = (1234)$.

\section{Cycles and witnesses} \label{graphs}

Here we are concerned in particular with nonplanar graphs. Following \cite{ccg}, we discuss planarity and nonplanarity of ribbon graphs in terms of the absence or presence of nonseparating cycles. 

\begin{figure}[h]
\begin{center}
\includegraphics[width=6cm, height=3cm]{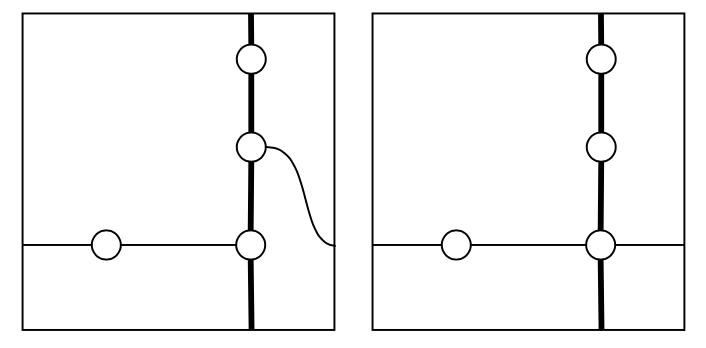} 
\end{center}
\caption{Nonseparating cycles (thickened) in two nonplanar ribbon graphs on the torus. At the left, we have a proper witness, while on the right we have an improper witness.}
\label{fig:nonsep}
\end{figure}

\begin{Definition} A cycle $C$ is \textit{nonseparating} if for any orientation of $C$ there exists a path $P$ that intersects $C$ only in the endpoints of $P$, such that the first edge of $P$ is on the left of $C$, and the last edge of $P$ is on the right of $C$. We will call $P$ a \textit{witness} for the nonseparating cycle $C$ and refer to the ordered pair $(C,P)$ as a \textit{witness pair}. 
\end{Definition}

\begin{Definition}
Given a nonseparating cycle $C$, we will say that a witness $P$ for $C$ is a \textit{proper witness} if the endpoints of $P$ are distinct vertices of $C$. In this case, we call the pair $(C,P)$ a \textit{proper witness pair} (see Figure \ref{fig:nonsep}).
\end{Definition}

A connected ribbon graph $G$ is \textit{nonplanar} if and only if it contains a nonseparating cycle. Note that this definition of nonplanarity is not equivalent to the underlying graph $G$ being nonplanar in the typical sense, but all nonplanar graphs will yield nonplanar ribbon graphs when endowed with any ribbon structure. A planar graph $G$ can be endowed with a ribbon structure such that the resulting ribbon graph is nonplanar. 

We have the following structural property of nonplanar graphs endowed with a ribbon structure.

\begin{Proposition}\label{prop:nonplanar}
A connected nonplanar graph $G$ endowed with any ribbon structure admits a \emph{proper} witness pair.
\end{Proposition}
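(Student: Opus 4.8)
The plan is to reduce the statement, via Wagner's theorem, to the two smallest nonplanar graphs $K_5$ and $K_{3,3}$, and then to exhibit a proper witness pair in each of these after first pinning down a short nonseparating cycle.

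The reduction rests on the following lemma: if $M$ is a minor of $G$ carrying the induced ribbon structure, then a proper witness pair in $M$ produces one in $G$. Since $M$ is obtained from $G$ by a sequence of edge deletions, vertex deletions, and edge contractions, it is enough to treat one operation at a time. Deletions are immediate — a cycle $C$, the partition of the edges at each vertex of $C$ into those on the left and those on the right, and a witness path all survive verbatim, the point being that whether an edge lies in an interval $[e_0,e_1]$ in a cyclic order is unaffected by what other edges are present; non-separation of $C$ survives because it is equivalent to the existence of witnesses for both orientations, which we have just lifted; and being a proper witness is trivially preserved. For a contraction $G\to G/e$ one checks that a cycle of $G/e$ through the contracted vertex lifts to a cycle of $G$ (possibly with $e$ re-inserted), that its left/right data and any witness lift compatibly through the splicing of the two cyclic orders, and that the two endpoint-vertices of a proper witness stay distinct. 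Granting this, Wagner's theorem supplies a $K_5$- or $K_{3,3}$-minor of the nonplanar graph $G$, so it suffices to produce a proper witness pair for an arbitrary ribbon structure on $K_5$ and on $K_{3,3}$.

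For these two graphs the one nontrivial input is that, in any ribbon structure, $K_5$ has a nonseparating triangle and $K_{3,3}$ a nonseparating $4$-cycle. This is homological: a cycle of an embedded graph is separating in the ambient surface $\Sigma$ exactly when its class in $H_1(\Sigma;\ZZ/2)$ vanishes, and for a cellular embedding — which the ribbon structure provides — the cycle space of the graph surjects onto $H_1(\Sigma;\ZZ/2)$. The triangles span the $\ZZ/2$-cycle space of $K_5$ (every $4$- or $5$-cycle is a sum of triangles) and the $4$-cycles span that of $K_{3,3}$ (every $6$-cycle is a sum of two $4$-cycles); so if every triangle of $K_5$ were separating we would get $H_1(\Sigma;\ZZ/2)=0$, hence $\Sigma=S^2$, contradicting the nonplanarity of $K_5$, and likewise for $K_{3,3}$.

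It remains to do the hands-on step, which I would carry out for $K_5$ and repeat for $K_{3,3}$. Fix a nonseparating triangle $C$ of $K_5$ on vertices $1,2,3$, with a chosen orientation; the other vertices are $4$ and $5$, and the edges meeting $C$ but not lying on it are the six edges $i4$ and $i5$ with $i\in\{1,2,3\}$. For $i\ne j$, the path $i-4-j$ meets $C$ only in its two distinct endpoints, and is a witness for one of the two orientations of $C$ precisely when $i4$ and $j4$ lie on opposite sides of $C$; likewise $i-5-j$. So if none of these six length-two paths is a proper witness, then $14,24,34$ all lie on a single side $s_4$ of $C$ and $15,25,35$ all on a single side $s_5$. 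If $s_4=s_5$, then the opposite side of $C$ carries no edge off $C$ at a vertex of $C$, so it is a single triangular face, hence a disk, which would make $C$ separating — a contradiction. If $s_4\ne s_5$, then $1-4-5-2$ meets $C$ only in $1\ne2$, its first edge $14$ lies on side $s_4$ and its last edge $25$ on side $s_5$; these being opposite sides, it is a proper witness. For $K_{3,3}$ one fixes a nonseparating $4$-cycle $C=x_1y_1x_2y_2$ with remaining vertices $x_3,y_3$, and runs the identical dichotomy with the paths $x_1-y_3-x_2$ and $y_1-x_3-y_2$, falling back in the last case to $x_1-y_3-x_3-y_1$. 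The step I expect to demand the most care is the bookkeeping in the reduction lemma — in particular, checking that the left/right combinatorics at a vertex transfers correctly under an edge contraction; the homological input and the two case analyses are short and self-contained.
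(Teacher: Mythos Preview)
Your proposal is correct and shares the paper's overall framework---reduce to $K_5$ and $K_{3,3}$ by noting that edge deletions and contractions cannot create proper witness pairs, then treat those two graphs directly---but the endgame is genuinely different. The paper takes an \emph{arbitrary} nonseparating cycle $C$ and branches on whether $C$ is Hamiltonian: if so, any witness must be a single chord and is therefore proper; if not, an excluded vertex (adjacent to every vertex of $C$ in $K_5$, or to two vertices of the opposite partition class in $K_{3,3}$) is used to shortcut a given witness into a proper one. You instead insert a homological step to guarantee a \emph{short} nonseparating cycle (a triangle in $K_5$, a $4$-cycle in $K_{3,3}$), exploiting that these span the $\ZZ/2$-cycle space and hence surject onto $H_1(\Sigma;\ZZ/2)\ne 0$; with the short cycle fixed, your side-coloring dichotomy on the off-cycle edges is clean and fully explicit, whereas the paper's treatment of the non-Hamiltonian case is rather compressed (the phrase ``any pair of edges connecting $b$ to vertices on the cycle yields a proper witness'' is not literally true---one really combines a single edge $bc$ with half of the original witness). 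The tradeoff is that your argument imports a topological fact, the equivalence of the combinatorial and homological notions of non-separation, that the paper's more hands-on argument avoids. Both proofs leave the contraction step of the reduction lemma at the same level of detail, and your flagging of that as the point needing the most care is apt.
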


\begin{proof}
Starting from an arbitrary ribbon graph $G$, first note that the operations of edge deletion and edge contraction cannot create a proper witness in $G$. Thus, by Kuratowski's theorem, it is enough to illustrate that $K_5$ and $K_{3,3}$ necessarily admit a proper witness pair when endowed with any ribbon structure.

Given any ribbon structure on $K_5$, there necessarily exists a nonseparating cycle; this cycle either includes all vertices or excludes at least one vertex. If the cycle contains all vertices, then any edge outside the cycle is a proper witness. Suppose then that the cycle excludes at least one vertex and has a witness containing a vertex $a$ on the cycle. Then some excluded vertex, say $b$, must be on a witness path; indeed, $b$ is adjacent to all vertices on the cycle and so any pair of edges connecting $b$ to vertices on the cycle yields a proper witness (see Figure \ref{fig:K5}). \\

\begin{figure}[h!]
\begin{center}
\includegraphics[width=8.5cm, height=2cm]{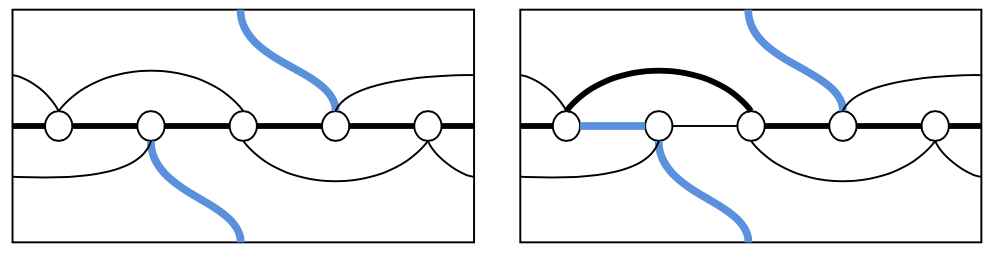} 
\end{center}
\caption{Two torus embeddings of $K_5$, each admitting proper witness pairs. In the left image, the thickened nonseparating cycle contains 5 vertices; in the right, the thickened cycle excludes one vertex. In each case, there is a proper witness path.}
\label{fig:K5}
\end{figure}

Consider now $K_{3,3}$. Suppose that the vertices of $K_{3,3}$ are labeled $\{a,b,c,x,y,z\}$, where the partition classes are $\{a,b,c\}$ and $\{x,y,z\}$. Since $K_{3,3}$ is nonplanar, any ribbon structure admits a nonseparating cycle $C$. If $C$ contains all vertices, then any witness is an edge outside the cycle and is therefore a proper witness. Suppose $C$ excludes at least one vertex. Since all cycles in bipartite graphs are of even length, any cycle in $K_{3,3}$ has at least two vertices from each partition class; suppose, without loss of generality, that $C$ contains the vertices $a$ and $b$. If some witness path consists of only one edge, this is a proper witness. In the case that no singleton witness path exists, some witness path $P$ must include some vertex $x$ in the other partition class. Now, since $x$ is adjacent to both $a$ and $b$, the path $P=\{\{a,x\},\{x,b\}\}$ is a proper witness for $C$, so that $(C,P)$ is a proper witness pair (see Figure \ref{fig:K33}).
\end{proof}

\begin{figure}[h]
\begin{center}
\includegraphics[width=10cm, height=2.5cm]{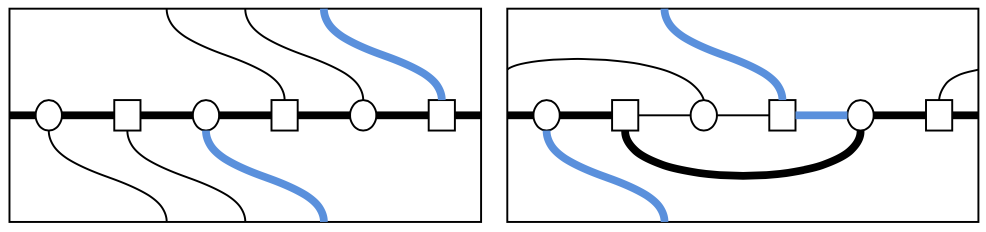} 
\end{center}
\caption{Two torus embeddings of $K_{3,3}$, each admitting proper witness pairs. In each, the two partition classes of vertices are represented by circles and squares. At the left, the thickened nonseparating cycle contains all 6 vertices; on the right, the thickened cycle contains only 4. Again, in each case the embedding admits a proper witness pair.}
\label{fig:K33}
\end{figure}

We next prove a similar result describing ribbon graphs admitting a nonplanar (as a ribbon graph) $K_4$ as a minor.

\begin{Proposition}
Suppose that $G$ is a graph containing the complete graph $K_4$ as a minor. If $G$ is endowed with a ribbon structure making the $K_4$ minor with the induced ribbon structure nonplanar, then $G$ admits a proper witness pair.
\end{Proposition}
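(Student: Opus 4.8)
\medskip
\noindent\textbf{Proof plan.}
The plan is to reduce the statement to a single finite check on $K_4$ and then dispatch that with a short case analysis. Recall from the proof of Proposition~\ref{prop:nonplanar} that neither edge deletion nor edge contraction can create a proper witness pair; equivalently, if some minor of $G$, taken with the induced ribbon structure, admits a proper witness pair, then so does $G$. By hypothesis $G$ has a $K_4$ minor whose induced ribbon structure is nonplanar, so it suffices to show that \emph{every nonplanar ribbon structure on $K_4$ admits a proper witness pair}.

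So I would next fix such a ribbon structure on $K_4$; by the characterization of nonplanarity it contains a nonseparating cycle $C$. Since $K_4$ is a simple graph on four vertices, $C$ is either a triangle omitting exactly one vertex, or the Hamiltonian $4$-cycle. The key claim is that in both cases \emph{every} witness $P$ for $C$ is automatically proper, i.e.\ has distinct endpoints, which immediately produces a proper witness pair $(C,P)$.

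If $C$ is the Hamiltonian $4$-cycle, then every vertex of $K_4$ lies on $C$; a witness $P$ meets $C$ only in its endpoints, so $P$ has no interior vertices and is therefore a single edge, whose two endpoints are distinct since $K_4$ is loopless. If instead $C$ is a triangle on $\{a,b,c\}$ with $d$ the omitted vertex, then the edges of $K_4$ not lying on $C$ are exactly the three edges at $d$; any interior vertex of a witness must avoid $C$ and hence equals $d$, and edge-injectivity of a path allows $P$ to pass through $d$ at most once. Thus $P$ is either a single edge joining two vertices of $C$, or consists of the two edges $\{x,d\}$ and $\{d,y\}$ with $x,y\in\{a,b,c\}$, and in the latter case edge-injectivity forces $x\neq y$. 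In every case the endpoints of $P$ are distinct vertices of $C$, so $P$ is proper, completing the reduction.

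I expect the only genuinely delicate point to be the reduction step itself: one must be careful that the ``no proper witness is created under minors'' principle from Proposition~\ref{prop:nonplanar} is being invoked correctly here, since $G$ need not be nonplanar as an abstract graph and in particular need not contain any nonseparating cycle outside the prescribed $K_4$ minor. The hypothesis that the induced ribbon structure on the $K_4$ minor is nonplanar is precisely what supplies the nonseparating cycle $C$ used above; everything past that point is the elementary finite verification on $K_4$ just described.
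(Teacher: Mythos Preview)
Your argument is correct, and the reduction step is the same as the paper's. Where you diverge is in the finite verification on $K_4$. The paper enumerates the nonplanar ribbon structures on $K_4$ up to symmetry: since every vertex has degree $3$, each vertex admits two cyclic orderings, and using $\mathrm{Aut}(K_4)\cong S_4$ together with orientation reversal one reduces to two ``types'' (what the paper calls $(3,1)$ and $(2,2)$), and then checks each by drawing the embedding and exhibiting a proper witness pair by inspection. You instead argue structurally that \emph{any} witness $P$ for \emph{any} nonseparating cycle $C$ in $K_4$ is automatically proper, using only that $K_4$ is simple and that at most one vertex lies off $C$. This is a genuinely different route: it is shorter, avoids pictures and the symmetry bookkeeping, and is uniform over all ribbon structures at once. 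The paper's approach, by contrast, produces explicit witness pairs in each case, which is visually informative but logically unnecessary for the proposition as stated.

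One small remark on your triangle case: when $C=\{a,b,c\}$ there are in fact no single edges off $C$ joining two vertices of $C$, so that branch of your dichotomy is vacuous there; you might say so explicitly. Also, the reason $P$ has length exactly $2$ in that case is cleaner than an appeal to ``passing through $d$ at most once'': every neighbor of $d$ lies on $C$, so after entering $d$ the path must immediately exit to an endpoint on $C$. These are cosmetic points; the logic is sound.
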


\begin{figure}[h]
\begin{center}
\includegraphics[width=5cm, height=2.8cm]{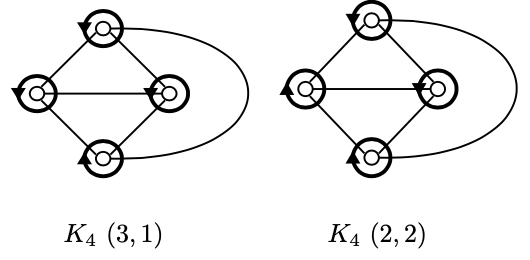} 
\end{center}
\caption{Two ribbon structures on $K_4$ yielding a nonplanar ribbon graph.}
\label{fig:K4ribbon}
\end{figure}

\begin{proof}
As in the proof of Proposition \ref{prop:nonplanar}, we begin with the observation that the presence of a nonseparating cycle with proper witness in the graph $K_4$ is sufficient to guarantee the presence of such a cycle with proper witness in any graph $G$ containing $K_4$ as a minor. It is therefore sufficient to show that each nonplanar ribbon structure on $K_4$ admits a proper witness pair. 

Since each vertex in $K_4$ is of degree 3, there are only two choices of cyclic ordering of the edges around each vertex (see Figure \ref{fig:K4ribbon}). In particular, we can fix a planar embedding of $K_4$ and enumerate all ribbon structures by choosing either a clockwise or counter-clockwise orientation of edges about each vertex. 

Moreover, since the automorphism group of $K_4$ is the full symmetric group on four elements, we need only consider the number of vertices oriented in either direction, rather than their relative positions. We refer to the ribbon structure in which $i$ vertices are oriented clockwise and $j$ are oriented counter-clockwise as being of type $(i,j)$. By reversing the orientation of the plane, it is clear that the presence of a proper witness pair in the ribbon structure of type $(i,j)$ ensures the presence of such a pair in the structure of type $(j,i)$. Now, since the ribbon structures of type $(4,0)$ and $(0,4)$ give rise to planar ribbon graph structures , we have only two choices of ribbon structures yielding a nonplanar ribbon graph $K_4$, arising respectively from the pairs in types $(3,1)$ and $(2,2)$. For each of these two ribbon structures, one can directly find the associated surface and verify that there exists a proper witness (see Figure \ref{fig:K4embedded}).
\end{proof}

\begin{figure}[h]
\begin{center}
\includegraphics[width=6.5cm, height=7cm]{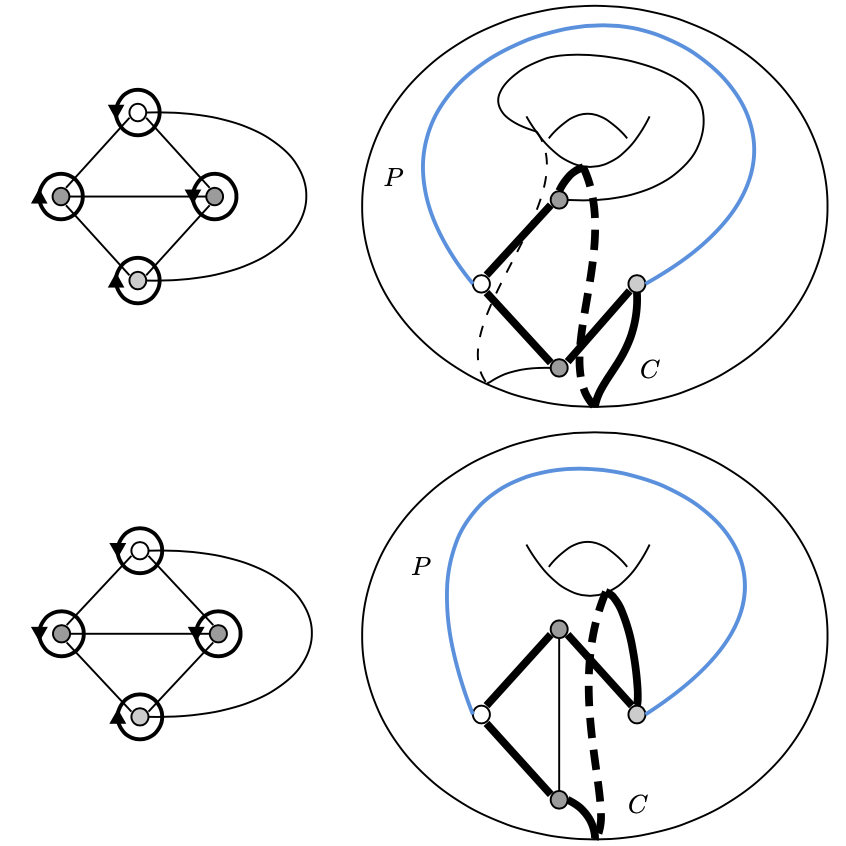} 
\end{center}
\caption{Proper witness pairs $(C,P)$ in each of the nonplanar ribbon structures on $K_4$. In each case, the cycle $C$ and its proper witness $P$ are given.}
\label{fig:K4embedded}
\end{figure}

In the case of a nonplanar ribbon graph which does not admit a proper witness pair, there is another kind of witness pair which will be of use to us.

\begin{Definition}\label{def:tight}
Let $G$ be a nonplanar ribbon graph that does not admit a proper witness pair. Suppose that $G$ admits a witness pair $(C,P)$ in $G$ such that $C$ and $P$ share a unique vertex $z$. We say that $(C,P)$ is a \textit{tight} witness pair if the edges $e$ and $f$ incident to $z$ in $C$ are such that the interval $[e,f]$ in the ribbon structure about $z$ has minimal length among all cycles $C$ passing through $z$ that have $P$ as a witness path.
\end{Definition}

Every finite nonplanar ribbon graph having no proper witness pair has at least one tight witness pair.

\section{Divisors on graphs}\label{divisors}

Given a graph $G$, let $\Div(G)$ denote the free abelian group with formal generators corresponding to the vertices of $G$. Elements of $\Div(G)$ are called \textit{divisors} on $G$; a given divisor $D$ has the form 
	\[D = \sum_{v\in V} a_v (v)\]
where $a_v\in \mathbb{Z}$ for all $v\in V$ and $(v)$ is the formal generator corresponding to $v$. It is often convenient to think of a divisor as an arrangement of \textit{chips} placed at the vertices of the graph $G$. The \textit{degree} of a divisor $D$ is the sum $\sum_{v\in V} a_v$. The collection of degree $d$ divisors is denoted by $\Div^d(G)$. Note that , given any vertex $q$ of $G$, the group $\text{Div}^0(G)$ is generated by the divisors $\{(v)-(q):\;v\in V\}$. 

Denote by $\mathcal{M}(G)$ the collection of functions $f:V\to\mathbb{Z}$. The \textit{combinatorial Laplacian} on $G$ is the mapping $\Delta:\mathcal{M}(G)\to\Div^0(G)$ given by \\
	\[\Delta f := \sum_{v\in V(G)} \bigg(\sum_{e=\{v,w\}} [f(v) - f(w)]\bigg) (v)\]
The image of $\Delta$ is the set of \textit{principal} divisors on $G$; such divisors form a subgroup $\Prin(G)$ of $\Div^0(G)$. We say that two divisors $D$ and $D'$ are \textit{linearly equivalent} if they differ by a principal divisor, so that $D'=D + \Delta f$ for some $f\in \mathcal{M}(G)$. The (degree 0) \textit{Picard group} of $G$ is the quotient group 
	\[\Pic^0(G) = \frac{\Div^0(G)}{\Prin(G)}\]
In general, we write $\Pic^d(G)=\Div^d(G)/\Prin(G)$; this is the collection of linear equivalence classes of degree-$d$ divisors. $\Pic^0(G)$ admits a regular action on $\Pic^d(G)$ for any $d$ given by addition of divisor classes. It is known that the cardinality of the Picard group of $G$ is equal to the number of spanning trees of the graph $G$ (see \cite{fm} and references therein). 

For a graph $G$, the \textit{genus} of $G$ is the first Betti number $g = |V(G)| - |E(G)| +1$; any spanning tree of $G$ has exactly $|E(G)| - g$ edges. Given a spanning tree $T\in\mathcal{T}(G)$ excluding edges $e_1,\ldots e_g$, a divisor $B\in \Div^g(G)$ is referred to as a \textit{break divisor} for $T$ if $B$ is of the form $B = \sum_{i=1}^g s(e_i)$, where $s(e_i)$ is one vertex of the edge $e_i$. Note that there are many possible distinct break divisors associated to a given tree.  We have the following property of break divisors: 

\begin{Proposition}[Theorem 1.1, \cite{abks}]\label{prop:breakdiv}
Each divisor class of $\Pic^g(G)$ contains a unique break divisor. 
\end{Proposition}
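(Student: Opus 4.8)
The plan is to characterise combinatorially which effective divisors of degree $g$ are break divisors, and then to read off uniqueness from that characterisation, with existence needing one additional input. For a nonempty $A\subseteq V(G)$, write $e(A)$ for the number of edges of $G$ with both endpoints in $A$, $c(A)$ for the number of connected components of the induced subgraph on $A$, $g(A):=e(A)-|A|+c(A)$ for its first Betti number (so $g(V(G))=g$), $\partial A$ for the set of edges with exactly one endpoint in $A$, and $\deg(D|_A):=\sum_{v\in A}D(v)$. The key claim is that an effective $B\in\Div^g(G)$ is a break divisor if and only if $\deg(B|_A)\ge g(A)$ for every nonempty $A\subseteq V(G)$. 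The forward direction is a direct count: if $B=\sum_{i=1}^{g}s(e_i)$ comes from a spanning tree $T$ with non-tree edges $e_1,\dots,e_g$, then each non-tree edge with both endpoints in $A$ contributes a chip to $A$, and since $T$ restricted to $A$ is a forest the number of such edges is at least $e(A)-(|A|-c(A))=g(A)$. For the converse one must recover $(T,s)$ from $B$: this amounts to assigning to each of the $g$ chips of $B$ (counted with multiplicity) a distinct incident edge so that the $g$ chosen edges form a basis of the cographic matroid $M^{\ast}(G)$ (equivalently, so that their complement is a spanning tree). By Rado's theorem such a transversal exists precisely when, for every nonempty $A$, the rank in $M^{\ast}(G)$ of the set of edges meeting $A$ is at least $\deg(B|_A)$; and this follows from the inequality $\deg(B|_A)\le g-g(A^{c})$ (immediate from $\deg B=g$ and the forward direction applied to $A^{c}:=V(G)\setminus A$) together with $g=|E(G)|-|V(G)|+1$.

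Uniqueness is then immediate. Suppose $B,B'$ are break divisors with $B'=B+\Delta f$ for some nonconstant $f\in\mathcal{M}(G)$, and let $A$ be the set of vertices where $f$ attains its maximum, so $\emptyset\neq A\subsetneq V(G)$. From the definition of $\Delta$, each edge of $\partial A$ contributes at least $1$ to $\deg((\Delta f)|_A)$ and each edge with both endpoints in $A$ contributes $0$, so
\[
\deg(B'|_A)=\deg(B|_A)+\deg((\Delta f)|_A)\ \ge\ g(A)+|\partial A|,
\]
while applying the claim to $B'$ on $A^{c}$ gives $\deg(B'|_A)=g-\deg(B'|_{A^{c}})\le g-g(A^{c})$. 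Hence $g(A)+g(A^{c})+|\partial A|\le g$. On the other hand, from $e(A)+e(A^{c})+|\partial A|=|E(G)|$ and $g=|E(G)|-|V(G)|+1$ one computes $g=g(A)+g(A^{c})+1+|\partial A|-c(A)-c(A^{c})$, so the previous inequality forces $c(A)+c(A^{c})\le 1$ — impossible, since $A$ and $A^{c}$ are both nonempty. Therefore $f$ is constant and $B=B'$.

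For existence, one may start from an effective representative of the given degree-$g$ class, which exists by the Riemann--Roch theorem for graphs. If this representative violates $\deg(B|_A)\ge g(A)$ for some $A$, one performs a chip-firing move that increases the number of chips on $A$ while keeping the divisor effective, selecting the set to fire by a Dhar-type burning procedure; the delicate point — and the main obstacle of the whole proof — is to organise such moves so that they terminate at a divisor satisfying all the inequalities, i.e.\ at a break divisor. This algorithmic step can in fact be bypassed: by the uniqueness just proved, the break divisors inject into $\Pic^g(G)$, so it suffices to show that there are at least $|\Pic^g(G)|=|\mathcal{T}(G)|$ of them; this count can be obtained, for instance, from the zonotopal tiling of the real Jacobian torus $\Jac(G)_{\RR}$ whose top-dimensional cells are indexed by spanning trees and whose lattice points are exactly the break divisors (as in \cite{abks}), whereupon the injection is forced to be a bijection.
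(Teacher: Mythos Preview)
The paper does not prove this proposition at all: it is quoted verbatim as Theorem~1.1 of \cite{abks} and used as a black box. So there is no ``paper's own proof'' to compare against, and your write-up should be judged on its own merits.

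Your characterisation of break divisors by the inequalities $\deg(B|_A)\ge g(A)$ is correct, and both directions are argued soundly. In the converse, the Rado step is fine once one notes (as you implicitly do) that any subfamily of the chip family has union equal to the edges meeting some $A$, and that the cographic rank of that edge set is $g-g(A^{c})$; so the Hall/Rado condition reduces exactly to the hypothesis on $A^{c}$. Your uniqueness argument is clean and correct: the computation $g(A)+g(A^{c})+|\partial A|=g-1+c(A)+c(A^{c})$ together with $\deg((\Delta f)|_A)\ge |\partial A|$ forces $c(A)+c(A^{c})\le 1$, a contradiction.

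The gap is in existence. Your first approach (Dhar-type moves from an effective representative) is openly left as a sketch with the termination issue unaddressed --- that is the hard part, and you have not supplied it. Your second approach, counting break divisors via the zonotopal tiling of $\Jac(G)_{\RR}$, amounts to invoking another theorem of \cite{abks} to prove this one; since in \cite{abks} the tiling and the bijection with $\Pic^{g}(G)$ are developed together, this risks circularity and in any case is not a self-contained argument. If you want an independent proof of existence, you should either (i) give the termination argument for the chip-firing procedure explicitly, or (ii) produce a direct bijection $\mathcal{T}(G)\to\{\text{break divisors}\}$ (e.g.\ via an orientation/indegree construction) and combine it with your injectivity into $\Pic^{g}(G)$. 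As written, the proposal proves uniqueness but not existence.
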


Consequently, we can use break divisors as canonical representatives of the equivalence classes in $\Pic^g(G)$.

\section{Bernardi and rotor-routing torsors}\label{torsors}

In addition to having the same cardinality as $\mathcal{T}(G)$, the Picard group $\Pic^0(G)$ also admits two regular group actions, or \textit{torsor structures}, on $\mathcal{T}(G)$. In particular we are concerned with two means of generating these actions; one is derived from a process due to Bernardi  and the other arises from a process referred to as rotor-routing (\cite{Bernardi}, \cite{HP}). We deal with torsor structures that depend on a choice of base vertex in the graph $G$. We remark that K\'{a}lm\'{a}n, Seunghun, and T\'{o}thm\'{o}r\'{e}sz have recently shown in \cite{tothmeresz} that, in the case of planar ribbon graphs, it is indeed possible to construct the Bernardi torsor and the rotor-routing torsor in a canonical way, without reference to a base vertex.

\subsection{Bernardi Torsor}

The first torsor structure considered in this work arise from a family of combinatorial maps, collectively referred to as the Bernardi process. As mentioned above, $\Pic^0(G)$ is in bijective correspondence with the set of spanning trees $\mathcal{T}(G)$. The works of Bernardi and Baker--Wang establish a family of bijections $\beta_{(q,e)}:\mathcal{T}(G)\to \Pic^g(G)$ witnessing this correspondence by mapping spanning trees to break divisors (\cite{Bernardi}, \cite{bw}). These maps are parametrized by pairs $(q,e)$ consisting of a vertex $q$ and an edge $e$ incident to $q$. 

\indent The Bernardi process uses the data $(q,e)$ and a spanning tree $T$ to perform a \textit{tour} $\tau_{(q,e)}(T)$ of the graph $G$. This tour can be thought of as a special walk in $G$ beginning and ending at $q$, traversing each edge of $T$ twice, and excluding each edge outside $T$. More specifically, the tour $\tau_{(q,e)}(T)$ takes the form $\tau_{(q,e)}(T) = (v_0,e_1,v_1,\ldots,e_k,v_k)$,
where $v_0=v_k=q$. 

Given $e_i,v_i$, the edge $e_{i+1}$ is chosen to be the first edge of $T$ succeeding $e_i$ in the ribbon structure about $v_i$; the process can be thought of as originating with the edge $e_0$ immediately preceding $e$ in the ribbon structure about $q$. We think of this process as starting with the initial data, traversing each edge of $T$ to the other endpoint, and cycling through the ribbon structure until finding another edge of $T$, ignoring each edge $f$ not included in $T$. In such a tour we ignore $g$ many edges $f_1,\ldots,f_g\notin T$, each being passed over two distinct times, once from each endpoint. 

\begin{figure}[h]
\begin{center}
\includegraphics[width=9cm, height=3.5cm]{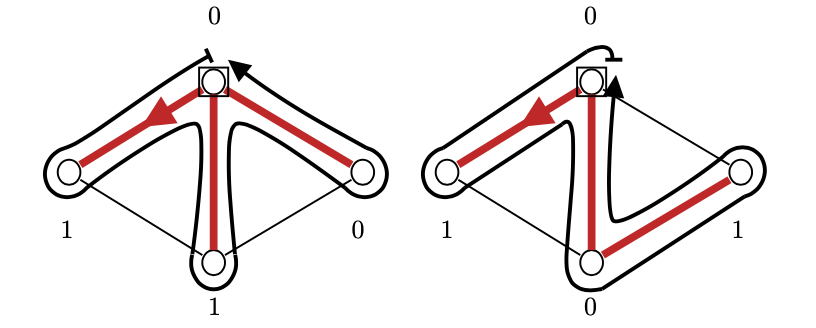} 
\end{center}
\caption{Bernardi tours and break divisors associated to two spanning trees of the given graph. Spanning trees are denoted by thickened red edges. In each case, the initial data $(q,e)$ is represented by a boxed vertex and an arrow along the initial edge.}
\label{fig:BernardiEx}
\end{figure}

In performing such a tour of $G$, we construct a divisor on $G$ by placing a chip at our feet the first time we ignore a given edge $f\notin T$. The divisor obtained in this fashion is necessarily a break divisor for the tree $T$. The mapping $\beta_{(q,e)}:\mathcal{T}(G)\to B(G)$ is given by mapping a spanning tree $T$ to the resulting break divisor. 

The process of deriving the break divisor $\beta_{(q,e)}(T)$ from $T$ is illustrated graphically in Figure \ref{fig:BernardiEx}. Since, by Proposition \ref{prop:breakdiv}, each divisor class of $\Pic^g(G)$ contains a unique break divisor, it follows that the maps $\beta_{(q,e)}$ induce explicit combinatorial bijections between spanning trees and divisor classes of degree $g$. 

From these bijections, Baker and Wang define a group action $\beta_q:\Pic^0(G)\times\mathcal{T}(G)\to\mathcal{T}(G)$ using the natural action of $\Pic^0(G)$ on $\Pic^g(G)$ \cite{bw}. Fix initial data $(q,e)$. Given any divisor class  $[D]\in\Pic^0(G)$, define 

\begin{equation}
\beta_q([D],T) :=\beta_{(q,e)}^{-1}([D]+[\beta_{(q,e)}(T)])  \label{eq:1}
\end{equation}

A central result of the paper of Baker and Wang states that the action \eqref{eq:1} is in fact independent of the chosen edge $e$ used in defining the bijection $\beta_{(q,e)}$ (see \cite[Theorem~4.1]{bw}). As a result of this, one can define a group action of $\Pic^0(G)$ on $\mathcal{T}(G)$ after fixing only a vertex of $G$, the \textit{base vertex} of the action. 

\subsection{Rotor-Routing Torsor}

Another regular action is derived from the \textit{rotor-routing} process on $G$, introduced in \cite{origRotor} and studied further in \cite{HP} and \cite{RR}. The rotor-routing process is a discrete-time dynamical system derived from $G$. The state space consists of \textit{rotor configurations} on $G$: pairs $(\sigma, v)$ where $v$ is a vertex of $G$, interpreted as the location of a chip, and $\sigma:V(G)\to E(G)$ is a \textit{rotor function} assigning to each vertex $w$ of $G$ an edge $\sigma(w)$ containing $w$. The edge $\sigma(w)$ is referred to as the \textit{rotor at w}. 

A single step of the rotor-routing process takes the rotor configuration $(\sigma, v)$ to $(\sigma',w)$, where $\sigma'$ is the rotor configuration for which $\sigma'(u)=\sigma(u)$ for all $u\neq v$, $\sigma'(v)$ is the successor of $\sigma(v)$ in the ribbon structure about $v$, and $w$ is the other vertex of $\sigma'(v)$. Informally, we move the rotor at $v$ once according to the ribbon structure about $v$ and move the chip along the new edge to its other vertex $w$. 

For the purposes of defining the action of $\Pic^0(G)$ on $\mathcal{T}(G)$, we choose a vertex $q$ of $G$ as a sink of the system and ignore the rotor at $q$. Given a spanning tree $T$ of $G$ and a vertex $q$ of $G$, we can obtain a rotor function $\sigma_T$ such that $\sigma_T(v)$ is the first edge on the unique path in $T$ from $v$ to $q$. Conversely, given a rotor function $\sigma$, we can obtain a subgraph $H_\sigma$ of $G$, whose only edges are the rotors of $\sigma$. 

\begin{figure}
\begin{center}
\includegraphics[width=9cm, height=3cm]{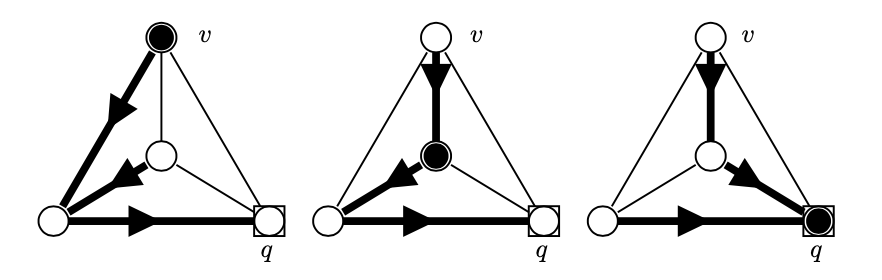} 
\end{center}
\caption{Rotor-routing process applied to a spanning tree of $K_4$. The original spanning tree $T$, indicated at the left by thickened edges, is mapped to the rightmost spanning tree $\rho_q([(v)-(q)])(T)$. At each stage, the location of the chip is indicated by a filled vertex.}
\label{fig:RRex}
\end{figure}

Given any vertex $q$ of $G$ and any spanning tree $T$ of $G$, iterating the rotor-routing process from any configuration $(\sigma_T,v)$ will eventually yield a configuration $(\tau,q)$; the chip will necessarily reach the vertex $q$ (\cite[Lemma~3.6]{RR}). Moreover, it can be shown that the graph $H_{\tau}$ derived from $\tau$ will be a spanning tree of $G$ (\cite[Lemma~3.10]{RR}). 

Using this observation, we can construct a group action $\rho_q$ of $\text{Div}^0(G)$ on $\mathcal{T}(G)$ by defining the action of generators $\{(v)-(q):v\in V(G)\}$ and extending linearly. Beginning from the rotor configuration $(\sigma_T,v)$, iterate the process until reaching some $(\tau,q)$ and let $T'$ be the spanning tree given by the final rotor function. This action is trivial on any divisor in $\text{Prin}(G)$ and so descends to an action by $\Pic^0(G)$. The rotor-routing action $\rho_q:\Pic^0(G)\times\mathcal{T}(G)\to\mathcal{T}(G)$ is then given by setting
	\begin{equation}
	\rho_q([(v)-(q)],T) = T' \label{eq:2}
	\end{equation}
See Figure \ref{fig:RRex} for an illustration of the rotor-routing action. In the investigation of \cite{ccg}, the following definition is of crucial use to establish the main findings. 

\begin{Definition}
A rotor configuration $(\sigma,v)$ is called a \textit{unicycle} if the image of the rotor function $\sigma$ contains a unique directed cycle and $v$ is a vertex of this cycle.
\end{Definition}

This definition will be of particular use to us due to the interaction between the rotor routing action and unicycles.

\begin{Lemma}[Lemma 4.9, \cite{HP}] \label{lem:unicycles}
Let $(\sigma,v)$ be a unicycle on a graph with $m$ edges. Then in iterating the rotor-routing process $2m$ times from $(\sigma,v)$, the chip traverses each edge of $G$ exactly once in each direction, each rotor makes exactly one full rotation, and the final state is again $(\sigma,v)$. 
\end{Lemma}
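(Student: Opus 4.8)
The plan is to treat the rotor-routing step as a reversible dynamical system on the finite set of unicycles, deduce that every orbit is periodic, and then extract the precise period and traversal counts from two conservation principles. The basic bookkeeping is that a single step does exactly two things: it advances one rotor by one click (the rotor at the chip's current location) and it moves the chip across exactly one directed edge. Hence after $N$ steps the total number of rotor-advances equals $N$ equals the total number of directed-edge traversals, so the three assertions of the lemma are really statements about how these $2m$ steps distribute among the vertices and edges.

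First I would show that rotor-routing \emph{preserves the unicycle property}, with the chip remaining on the unique directed cycle. Starting from $(\sigma,v)$ with $v$ on the cycle $C$, the rotor $\sigma(v)$ is the out-edge of $v$ along $C$; advancing it to $\sigma'(v)$ and moving the chip to the new endpoint $w$ destroys $C$ at $v$ but creates exactly one new cycle through the edge $v\to w$ --- a ``shortcut'' if $w$ already lay on $C$, and a ``detour'' through the tree part otherwise. In both cases one checks that the resulting configuration has a unique cycle, that the chip $w$ sits on it, and that $v$ is its cyclic predecessor. This last observation gives reversibility for free: the inverse step is ``from the chip $w$, pass to its cyclic predecessor $u$, retreat the rotor at $u$ by one click, and move the chip to $u$,'' which is well-defined precisely because the cycle is unique. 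Since rotor-routing is thus a bijection of the finite set of unicycles, the orbit of $(\sigma,v)$ is a cycle and iteration must return to $(\sigma,v)$; let $\ell$ denote the first return time.

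Next I would pin down $\ell$ by conservation. Because the state returns to itself after $\ell$ steps, every rotor is back to its starting position, so the rotor at each vertex $w$ is advanced a multiple $k_w\deg(w)$ of times; equivalently the chip exits $w$ exactly $k_w\deg(w)$ times. Connectivity forces the chip to visit every vertex (any unvisited vertex adjacent to a visited one would be reached once the neighbour's rotor sweeps onto the connecting edge), so every $k_w\ge 1$ and every rotor genuinely moves. Counting the chip's closed walk, the number of entries into $w$ equals the number of exits; and since the rotor at a neighbour $u$ lands on a fixed edge $uw$ exactly once per full rotation, the entries into $w$ total $\sum_{e\ni w} k_{\,\mathrm{other}(e)}$ while the exits total $k_w\deg(w)$. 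Equating them yields the discrete-harmonic identity $k_w=\frac{1}{\deg(w)}\sum_{e\ni w} k_{\,\mathrm{other}(e)}$, so $(k_w)$ is harmonic on the connected graph and therefore constant, say $k_w\equiv k$. This already gives $\ell=k\sum_w\deg(w)=2mk$, with each rotor making $k$ rotations and each directed edge traversed exactly $k$ times.

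The main obstacle is the final step, showing $k=1$; everything above is consistent with any $k\ge 1$, so the value $1$ is a genuinely global feature of the orbit rather than a local count. I would attack it through minimality of $\ell$: it suffices to prove the minimal period is at most $2m$, since the harmonic computation already forces $\ell=2mk\ge 2m$. Equivalently, one must show that the readout sending a state in the orbit to the pair (chip position, cycle out-edge at the chip) is injective, i.e.\ that within a single orbit the entire rotor function is reconstructible from the chip's location together with the current cycle; such injectivity bounds the orbit length by the number $2m$ of directed edges and forces $k=1$. This is where I expect the real work: the reconstruction uses reversibility together with the shortcut/detour description of how the unique cycle evolves, tracking that a directed edge, once traversed, becomes a cycle edge and cannot be retraversed before the whole configuration has recurred. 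This is precisely the content isolated as Lemma~4.9 in \cite{HP}, and it is the step that resists a purely local counting argument.
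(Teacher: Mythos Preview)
The paper does not prove this lemma; it is quoted as Lemma~4.9 of \cite{HP} and used as a black box, with the text passing immediately to the next statement. There is therefore no in-paper argument to compare your proposal against.

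On the merits of your outline: the reduction through the harmonic step is correct. Rotor-routing is a bijection on the set of unicycles (your shortcut/detour analysis and the explicit inverse you describe both check out), so periodicity follows; and equating entries with exits at each vertex does force the rotation counts $k_w$ to be harmonic and hence constant on the connected graph, yielding period $2mk$ with $k\ge 1$. The gap is exactly where you place it: you do not establish $k=1$. Injectivity of the readout $(\sigma,w)\mapsto (w,\sigma(w))$ on the orbit would indeed bound the orbit by $2m$ and finish, but you give no argument for it, and it is not a formality---two orbit states can share chip position and chip-rotor while a priori differing at other vertices, and ruling this out is precisely the substantive content. Your closing sentence, which defers this step back to ``Lemma~4.9 in \cite{HP},'' is circular: that is the very statement you are meant to be proving. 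As written, then, the proposal is an accurate diagnosis of where the difficulty lies rather than a proof.
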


The following lemma is an easy consequence of Lemma \ref{lem:unicycles}:

\begin{Lemma}\label{lem:rotations}
Suppose that $G$ is a ribbon graph, $(\sigma,w)$ a unicycle on $G$, and $z$ a vertex satisfying $\sigma(z)=w$. Then, for all neighbors $v\neq w$ of $z$ satisfying $\sigma(v)=\{v,z\}$, the rotor at $z$ reaches $\{z,v\}$ before the rotor at $v$ completes a full rotation.
\end{Lemma}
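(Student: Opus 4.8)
The plan is to run the rotor-routing process from the unicycle $(\sigma, w)$ and to read everything off from the $2m$-step period described in Lemma~\ref{lem:unicycles}, where $m = |E(G)|$: over one such period every rotor makes exactly one full turn, the chip crosses each edge exactly once in each direction, and the final state is again $(\sigma, w)$. First I would fix notation. Reading the hypothesis as $\sigma(z) = \{z,w\}$, I list the edges incident to $z$ in cyclic order $\{z,w\} = e^{(0)} \prec e^{(1)} \prec \cdots \prec e^{(\deg z - 1)}$ and let $j$ be an index with $e^{(j)} = \{z,v\}$; since $v \ne w$ we have $1 \le j \le \deg z - 1$, and since $G$ has no loops we also have $z \ne w$. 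I also record that the rotor at $v$ starts at the edge $\{v,z\}$.

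Next I would translate the two events in the statement into facts about the chip's itinerary, using the local rule that the rotor at a vertex advances by one exactly when the chip leaves that vertex, whereupon the chip follows the \emph{new} rotor edge. By Lemma~\ref{lem:unicycles}, during one period the chip is located at $z$ at exactly $\deg z$ times $t_1 < \cdots < t_{\deg z}$ (all strictly between $0$ and $2m$, as $z \ne w$) and at $v$ at exactly $\deg v$ times $r_1 < \cdots < r_{\deg v}$. Consequently the rotor at $z$ first equals $e^{(j)} = \{z,v\}$ right after the chip's $j$-th visit to $z$, i.e.\ at step $t_j + 1$, and at that very step the chip moves along $\{z,v\}$ to $v$; hence $t_j + 1$ is one of the visit times $r_1, \dots, r_{\deg v}$. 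On the other side, the rotor at $v$ returns to its starting edge $\{v,z\}$ — this is what it means to ``complete a full rotation'' — exactly at step $r_{\deg v} + 1$, immediately after the chip's last visit to $v$.

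The conclusion then drops out of a one-line comparison: from $t_j + 1 \in \{r_1, \dots, r_{\deg v}\}$ we get $t_j + 1 \le r_{\deg v} < r_{\deg v} + 1$, so the rotor at $z$ attains $\{z,v\}$ at step $t_j+1$, strictly before step $r_{\deg v}+1$ at which the rotor at $v$ finishes turning. I expect no serious obstacle here — the lemma really is a corollary of Lemma~\ref{lem:unicycles} — so the only care needed is bookkeeping: making sure ``completing a full rotation'' is read as ``returning to the edge $\{v,z\}$''; using Lemma~\ref{lem:unicycles} crucially to know that each vertex is visited \emph{exactly} as many times as its degree (a priori a rotor could turn less than once or more than once, in which case the argument would collapse); and checking that degenerate configurations do no harm — $v \ne w$ keeps $j \ge 1$, the no-loops hypothesis keeps $z \ne w$, and parallel edges between $z$ and $w$ or between $z$ and $v$ only affect which concrete edge is denoted $\{z,v\}$, never the logic.
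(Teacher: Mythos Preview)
Your proof is correct and takes essentially the same approach as the paper: both arguments are immediate consequences of Lemma~\ref{lem:unicycles}, using that in one $2m$-step period every rotor makes exactly one full turn and the chip crosses each edge once in each direction. The paper phrases this as a short proof by contradiction (if the rotor at $v$ finished a full turn before the rotor at $z$ reached $\{z,v\}$, then the subsequent visit to $v$ via $\{z,v\}$ would force more than one full turn at $v$), whereas you give the same counting directly by indexing visit times; the underlying mechanism is identical.
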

\begin{proof}
Let $v$ be a neighbor of $z$ such that $\sigma(v)=\{v,z\}$. Suppose, for the sake of contradiction, that in iterating the rotor-routing process from $(\sigma,z)$, the rotor at $v$ completes a full rotation before the rotor at $z$ reaches $\{z,v\}$. Then in order for the rotor at $z$ complete a full rotation, we pass through some rotor configuration $(\rho,v)$ such that $\rho(z)=v$. Then iterating the rotor-routing process one additional time the rotor at $v$ has completed more than one full turn before the rotor at $z$ returns to its starting position, contradicting Lemma \ref{lem:unicycles}.
\end{proof}

\subsection{Comparison of Torsor Structures}\label{comparison}

The rotor-routing and Bernardi processes described in the previous subsections give rise to regular group actions of $\Pic^0(G)$ on $\mathcal{T}(G)$, yielding two torsor structures on $\mathcal{T}(G)$ via \eqref{eq:1} and \eqref{eq:2}. Each of the two structures is dependent on a choice of some vertex of $G$ as base vertex for the action. These torsor structures have been the subject of much study in recent years (\cite{bw}, \cite{ccg}, \cite{genus}, \cite{tothmeresz}).

Fix some vertex $q\in V(G)$ and consider $\rho_q$ and $\beta_q$. To see that the two homomorphisms $\rho_q$ and $\beta_q$ agree, we must have that
	\[\rho_q([D],T) = \beta_q([D],T)\]
for each $T\in\mathcal{T}(G)$ for each divisor class $[D]\in\Pic^0(G)$. In particular, since divisors of the form $(v)-(q)$ generate $\text{Div}^0(G)$, by \eqref{eq:1} and \eqref{eq:2}, it is sufficient to verify that for each $v\in V(G)$ we have the equality
	\[\beta_{(q,e)}(\rho_q([(v)-(q)],T)) = [(v)-(q)] + \beta_{(q,e)}(T)\]
In contrast, to show that $\rho_q\neq\beta_q$ we need only find a single tree $T\in\mathcal{T}$ for which the above equality does not hold; doing so immediately yields that as permutations of $\mathcal{T}(G)$,
\[\rho_q([(v)-(q)],\;\cdot\;) \neq \beta_q([(v)-(q)],\;\cdot\;)\] 
hence $\rho_q$ and $\beta_q$ cannot be equal. 

We have the following theorem about the rotor-routing torsor structure on $\mathcal{T}(G)$: 

\begin{Theorem}[Theorem 2, \cite{ccg}]
Let $G$ be a connected ribbon graph. The action $\rho_q$ of $\Pic^0(G)$ on $\mathcal{T}(G)$ is independent of the base vertex $q$ if and only if $G$ is a planar ribbon graph.
\end{Theorem}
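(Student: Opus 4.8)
The plan is to prove the two implications separately, in each case reducing to a local question about a single edge. Since $G$ is connected, any two vertices are joined by a path and equality of group actions is transitive, so $\rho_q$ is independent of the base vertex if and only if $\rho_q=\rho_{q'}$ for every edge $e=\{q,q'\}$ of $G$. For a fixed such edge, both $\rho_q$ and $\rho_{q'}$ are regular actions of the single group $\Pic^0(G)$ on the single set $\mathcal{T}(G)$, so to verify $\rho_q=\rho_{q'}$ it suffices to check agreement on a generating set of $\Pic^0(G)$ — for instance on the classes $[(v)-(q)]$ for $v\in V(G)$ together with $[(q)-(q')]$. Thus the theorem reduces to showing: $\rho_q=\rho_{q'}$ for the endpoints of $e$ precisely when $e$ lies on no nonseparating cycle.

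\emph{The planar direction.} Suppose $G$ is a planar ribbon graph, so it has no nonseparating cycle at all, and fix an edge $e=\{q,q'\}$. I would run, in parallel, the two rotor-routing computations that define $\rho_q([D],T)$ and $\rho_{q'}([D],T)$ for a generator $[D]$ and a tree $T$, coupling the chip-and-rotor evolutions step by step. The main bookkeeping device is Lemma~\ref{lem:unicycles}: from a unicycle, $2m$ steps of the process restore the starting configuration while every rotor turns exactly once and the chip crosses every edge once in each direction. Using this one shows that switching the sink from $q$ to $q'$ changes the outcome only through a ``transfer'' carried along cycles of $G$ passing through $e$, and that this transfer closes up — leaving the output tree unchanged — unless some such cycle fails to separate the two sides of $e$. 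Planarity rules out any nonseparating cycle, so the coupling closes for every edge, giving $\rho_q=\rho_{q'}$ on each edge and hence independence of the base vertex.

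\emph{The nonplanar direction.} Suppose instead $G$ is nonplanar; then it contains a nonseparating cycle $C$ (indeed, by Proposition~\ref{prop:nonplanar}, a proper witness pair $(C,P)$, though here $C$ with any witness $P$ suffices). Orient $C$, and let $P$ run from $a\in C$ to $b\in C$ with its first edge on the left of $C$ and its last edge on the right. Choose two distinct vertices $q,q'$ of $C$. I would build a spanning unicycle $\sigma$ whose unique directed cycle is the oriented $C$, by first choosing rotors routing every vertex into $C$ and then letting the rotors on $C$ follow $C$; deleting a suitable edge $e'$ of $C$ from $H_\sigma$ produces a spanning tree $T$ with $\sigma_T$ equal to $\sigma$ away from one vertex, so that $(\sigma,q)$ and $(\sigma,q')$ are both unicycles. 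Comparing $\rho_q([(q')-(q)],T)$ with $\rho_{q'}([(q)-(q')],T)$ by iterating the process, the witness $P$ certifies that, en route to the sink, the chip is forced to move from the left of $C$ to the right of $C$ in one computation but not in the other; this produces different output trees, so $\rho_q\neq\rho_{q'}$ and $\rho_q$ is not independent of $q$.

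\emph{Main obstacle.} The crux is the local comparison in the planar direction: making precise the sense in which the sink-$q$ and sink-$q'$ computations ``differ only along cycles through $e$'', and identifying nonseparating-ness as exactly the obstruction to the coupling closing up. This is the technical heart of \cite{ccg} and requires a careful hands-on analysis of how unicycles evolve under rotor-routing and of how the two sides of a cycle get visited by the chip; by contrast, the reductions above and the explicit construction in the nonplanar direction are comparatively routine once the correct unicycle and spanning tree have been chosen.
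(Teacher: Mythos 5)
This statement is imported verbatim from Chan--Church--Grochow (\cite{ccg}, Theorem~2); the present paper gives no proof of it, so your sketch has to stand on its own, and it does not. Your opening reductions are fine: connectivity reduces base-point independence to agreement of $\rho_q$ and $\rho_{q'}$ across each edge $\{q,q'\}$, and agreement of two actions of $\Pic^0(G)$ may be checked on generators. But the planar direction, which is the substantive half of the theorem, is entirely absent. The whole argument is the sentence ``one shows that switching the sink from $q$ to $q'$ changes the outcome only through a `transfer' carried along cycles of $G$ passing through $e$, and that this transfer closes up.'' You never define the coupling, never say what the ``transfer'' is, and never exhibit the mechanism by which separating-ness of every cycle through $e$ forces the two output trees to coincide. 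You acknowledge this yourself in your final paragraph; but labeling the missing step as the technical heart is not a substitute for supplying it, and Lemma~\ref{lem:unicycles} alone does not bridge the gap.

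The nonplanar direction also has two concrete problems. First, a logical slip: you propose to compare $\rho_q([(q')-(q)],T)$ with $\rho_{q'}([(q)-(q')],T)$, i.e.\ the actions of two \emph{different} (mutually inverse) classes under the two torsors. A discrepancy between those two outputs does not witness $\rho_q\neq\rho_{q'}$; you must either compare the action of the \emph{same} class $[D]$ on the same tree under both torsors, or argue that the composite $\rho_{q'}([(q)-(q')])\circ\rho_q([(q')-(q)])$ fails to be the identity (which would suffice, since equality of the torsors forces that composite to be $\rho(0)=\mathrm{id}$) --- but you do neither explicitly. Second, the decisive claim --- that the witness path $P$ forces the chip to cross from the left of $C$ to the right in one computation but not the other, and that ``this produces different output trees'' --- is asserted without proof. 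By Lemma~\ref{lem:unicycles} the chip traverses \emph{every} edge once in each direction over a full $2m$-step period, so merely crossing $C$ says nothing; what matters is the state of the rotors (how far each has turned) at the moment the chip first reaches each sink, and it is exactly there that the nonseparating hypothesis must be brought to bear. As written, the construction of the unicycle supported on $C$ is a reasonable starting point, but the inference from ``$C$ is nonseparating'' to ``the two resulting spanning trees differ'' is missing.
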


The work of Baker and Wang established an analogous result for the torsor structure derived from the Bernardi process: 

\begin{Theorem}[Theorems 5.1 and 5.4, \cite{bw}]
Let $G$ be a connected ribbon graph. The action $\beta_q$ of $\Pic^0(G)$ on $\mathcal{T}(G)$ is independent of the base vertex $q$ if and only if $G$ is a planar ribbon graph.
\end{Theorem}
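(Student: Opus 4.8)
The plan is to reduce base-point independence to the constancy (as $T$ varies) of a single $\Pic^0(G)$-valued invariant of spanning trees, to evaluate that invariant in the planar case using the non-crossing structure of the non-tree edges, and to make it vary in the nonplanar case by exploiting a nonseparating cycle. First I would set up the reduction. For each vertex $v$ fix an incident edge $e_v$; by \cite[Theorem~4.1]{bw} the action $\beta_v$ is insensitive to this choice. For vertices $p,q$ put
\[
\delta_{p\to q}(T):=\big[\beta_{(q,e_q)}(T)\big]-\big[\beta_{(p,e_p)}(T)\big]\in\Pic^0(G).
\]
Since $\beta_{(p,e_p)}$ is a bijection from $\mathcal{T}(G)$ onto $\Pic^g(G)$ (Proposition~\ref{prop:breakdiv}) and the classes $(v)-(q)$ generate $\Pic^0(G)$, a direct manipulation of \eqref{eq:1} shows that $\beta_p=\beta_q$ as actions if and only if $\delta_{p\to q}(T)$ is independent of $T\in\mathcal{T}(G)$; moreover $\delta_{p\to q}=\delta_{p\to w}+\delta_{w\to q}$, so connectivity of $G$ reduces the whole problem to adjacent pairs $p,q$. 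Fix such a pair, an edge $e=\{p,q\}$, and take $e_p=e_q=e$.

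Next I would read $\delta_{p\to q}(T)$ off the Bernardi tour. For fixed $T$ the tours $\tau_{(p,e)}(T)$ and $\tau_{(q,e)}(T)$ are one and the same closed walk — the contour walk of $T$ with the ribbon structure inherited from $G$ — started at different corners. For a non-tree edge $f=\{a,b\}$ the tour ``skips'' $f$ exactly once at $a$ and once at $b$, and the break divisor records a chip at whichever of the two skip events comes first after the chosen starting corner. Hence
\[
\delta_{p\to q}(T)=\sum_{f}\varepsilon_f\big((a)-(b)\big),
\]
the sum running over non-tree edges $f=\{a,b\}$ whose two skip events fall on opposite arcs of the contour cut out by the two starting corners, with signs $\varepsilon_f\in\{\pm1\}$. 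Those two corners sit at the two traversals of $e$ when $e\in T$, and at the two skip events of $e$ when $e\notin T$.

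For the ``if'' direction, embed $G$ in $S^2$. Then $S^2\setminus T$ is an open disk whose boundary circle is the contour of $T$, and the non-tree edges are pairwise non-crossing chords of this disk. If $e\notin T$, the two starting corners are the endpoints of the chord $e$, no other chord separates them, and $\delta_{p\to q}(T)$ collapses to its $e$-term, which is $(q)-(p)$. If $e\in T$, write $T\setminus e=T_p\sqcup T_q$ with $p\in T_p$; the two corners separate the contour into the part coming from $T_p$ and the part coming from $T_q$, the non-tree edges with one skip event in each part are exactly the cut edges between $V(T_p)$ and $V(T_q)$ other than $e$, and that signed sum of $(a)-(b)$ equals $\Delta(\mathbf{1}_{V(T_q)})$, which is principal; so again $\delta_{p\to q}(T)=(q)-(p)$. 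Thus $\delta_{p\to q}$ is the constant class $\big[(q)-(p)\big]$, so $\beta_p=\beta_q$ and $\beta_q$ is base-point independent. (One could instead invoke the identity $\beta_q=\rho_q$ for planar ribbon graphs together with the Chan--Church--Grochow theorem quoted above, but that borrows the comparison $\beta=\rho$ as extra input.)

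For the ``only if'' direction I would argue contrapositively. A nonplanar $G$ contains a nonseparating cycle $C$ with a witness path $P$ — proper, by Proposition~\ref{prop:nonplanar}, if that helps — whose first edge lies left of $C$ and last edge right of $C$. The aim is to produce two spanning trees $T_1,T_2$ and an adjacent pair $p,q$ on $P$ such that in the contour of $T_1$ the chord $e$ links nothing, giving $\delta_{p\to q}(T_1)=\big[(q)-(p)\big]$, while $T_2$ is routed so that a single edge of $C$ and a single edge of $P$ become non-tree chords linked with $e$ — the linking being forced precisely because $C$ is nonseparating and $P$ crosses it — contributing an extra term that survives in $\Pic^0(G)$, so that $\delta_{p\to q}(T_2)\neq\big[(q)-(p)\big]$; then $\beta_p\neq\beta_q$. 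This last step is the one I expect to fight hardest with: choosing $T_1,T_2$ and $p,q$ explicitly, checking that the relevant chords are genuinely linked (this is exactly where a \emph{nonseparating} cycle, as opposed to a merely positive-genus ambient surface, is essential), and verifying that the surviving contribution is not a principal divisor. Everything before it is bookkeeping with the contour walk; here the topology has to do the real work.
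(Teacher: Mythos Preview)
This theorem is not proved in the present paper; it is quoted without proof from Baker and Wang \cite{bw} (their Theorems~5.1 and~5.4) as background in \S\ref{comparison}. There is therefore no proof here against which to compare your proposal.

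On the merits of the sketch itself: the reduction to constancy of $\delta_{p\to q}(T)$ and the reading of that class off the common contour walk are correct and are essentially the framework used in \cite{bw}. In the planar direction your outline is right, but the key combinatorial claim---that for $e\in T$ the non-tree chords whose two skip events lie on opposite arcs are exactly the non-$e$ cut edges between $V(T_p)$ and $V(T_q)$, with signs making the sum principal---needs an actual argument, and you should verify that the $e\in T$ and $e\notin T$ cases yield the \emph{same} constant class (your write-up asserts $[(q)-(p)]$ in both cases without tracking the sign in the second). The nonplanar direction is, as you yourself say, not done: you have neither constructed $T_1,T_2$ nor chosen $p,q$, nor shown that the extra linked-chord contribution is nonprincipal. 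That is not residual bookkeeping; it is the entire content of the hard implication. Note also that your appeal to Proposition~\ref{prop:nonplanar} for a \emph{proper} witness is unjustified here: that proposition concerns nonplanar \emph{graphs}, whereas a nonplanar \emph{ribbon} graph may well be a planar graph with a nonplanar ribbon structure and admit only improper witnesses---precisely the situation the paper handles separately in \S\ref{nopropwit}.
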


In the case of a planar ribbon graph $G$, neither the rotor-routing action nor the Bernardi action is dependent on the choice of base vertex; both actions are in this sense canonical. 

\begin{Theorem}[Theorem 7.1, \cite{bw}]
For a planar ribbon graph $G$, the Bernardi and rotor-routing processes define the same $\Pic^0(G)$-torsor structure on $\mathcal{T}(G)$.
\end{Theorem}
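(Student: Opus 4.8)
The plan is to reduce the equality of the two torsors to a single local identity and then verify that identity by running both processes explicitly, invoking planarity only in the guise ``$G$ has no nonseparating cycle.'' First I would observe that it suffices to check $\rho_q([(v)-(q)],T)=\beta_q([(v)-(q)],T)$ for every \emph{edge} $\{q,v\}$ of $G$ and every spanning tree $T$. Indeed, fix any vertex $q$ and let $S\subseteq\Pic^0(G)$ be the set of classes $[D]$ with $\rho_q([D],T)=\beta_q([D],T)$ for all $T$; since $\rho_q$ and $\beta_q$ are regular actions of the same group on the same set, a short computation shows $S$ is a subgroup (it contains $0$ and is closed under sums and negatives). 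By the two base-point-independence theorems cited above, neither $\rho_q$ nor $\beta_q$ depends on $q$ for planar $G$, so $S$ is the same subgroup for every base vertex. On the other hand, because $G$ is connected, $\Pic^0(G)$ is generated by the classes $[(v)-(w)]$ ranging over all edges $\{v,w\}$ (telescope $(v)-(q)$ along a path in $G$ and recall that the differences $(v)-(q)$ generate $\Div^0(G)$). Taking $w$ as the base vertex, each such generator has the form $[(v)-(w)]$ with $v$ adjacent to $w$, so the reduction follows.

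Second, I would obtain an explicit combinatorial description of $\rho_q([(v)-(q)],T)$ when $v$ is adjacent to $q$ via an edge $e$. Running rotor-routing from $(\sigma_T,v)$ until the chip returns to $q$, one can bring Lemma~\ref{lem:unicycles} and Lemma~\ref{lem:rotations} to bear by completing $\sigma_T$ to a full rotor function with $\sigma(q)=e$: then $H_\sigma$ has a unique directed cycle, through $q$, so $(\sigma,q)$ is a unicycle, and the unicycle lemmas control which rotors turn and how far before the chip first reaches $q$. The upshot should be that $T':=\rho_q([(v)-(q)],T)$ is obtained from $T$ by a single ``reattachment'' along the fundamental cycle of $e$ with respect to $T$ when $e\notin T$ (respectively along the $v$--$q$ path in $T$ when $e\in T$), the precise reattachment being dictated by the cyclic orders at the vertices involved. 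In parallel, I would unwind $\beta_q([(v)-(q)],T)=\beta_{(q,e)}^{-1}\!\big([(v)-(q)]+[\beta_{(q,e)}(T)]\big)$: compute the break divisor $\beta_{(q,e)}(T)$ from the Bernardi tour, add $(v)-(q)$, and then, using uniqueness of break divisors (Proposition~\ref{prop:breakdiv}), identify the spanning tree whose Bernardi break divisor lies in the resulting class. Here too the answer is governed by the ribbon structure, which for planar $G$ is literally the combinatorics of walking around the faces of the embedding.

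The final and hardest step is to match the two trees produced above. The crucial input is planarity: since $G$ admits no nonseparating cycle, every cycle $C$ genuinely separates the surface, so ``left of $C$'' and ``right of $C$'' are well-defined disjoint regions, and both the rotor-routing chip trajectory and the Bernardi tour, once they meet $C$, proceed coherently on one side of it. This coherence is what forces the two a priori different bookkeeping schemes (chip-plus-rotor versus tour-plus-break-divisor) to prescribe the same reconnection of $T$, and hence $\rho_q([(v)-(q)],T)=\beta_q([(v)-(q)],T)$. I expect this comparison to be the main obstacle: the reduction and the two explicit descriptions are largely mechanical once the right invariant (the side of a cycle, equivalently the face of the embedding) is isolated, whereas the matching step is where one must show the two processes literally agree — and it must be able to \emph{fail} in the nonplanar case, since a nonseparating cycle is precisely what lets one process ``wrap around'' and reattach differently, which is the phenomenon exploited in the rest of the paper.
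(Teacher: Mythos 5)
This statement is quoted from \cite{bw} (their Theorem~7.1); the present paper gives no proof of it, so your outline can only be judged on its own terms. Your first step is correct and is a genuine reduction: the set $S_q$ of classes $[D]$ on which $\rho_q(\,[D],\cdot\,)$ and $\beta_q(\,[D],\cdot\,)$ agree as permutations of $\mathcal{T}(G)$ is a subgroup of $\Pic^0(G)$ (your closure computation works because both are actions of the same group on the same set), base-point independence in the planar case makes this subgroup independent of the base vertex, and the classes $[(v)-(w)]$ with $\{v,w\}\in E(G)$ generate $\Pic^0(G)$ by telescoping along paths in the connected graph $G$. So it does suffice to verify agreement on $[(v)-(q)]$ with $v$ adjacent to $q$.

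Everything after that, however, is a description of what you hope happens rather than an argument, and this is where the entire content of the theorem lies. Two concrete problems. First, your proposed ``explicit description'' of $T'=\rho_q([(v)-(q)],T)$ as a single reattachment along the fundamental cycle of $e$ is not correct in general: running the chip from $v$ until it first reaches the sink $q$ can advance the rotors at many vertices, so $T'$ may differ from $T$ in many edges (the paper's own Figure~\ref{fig:RRex} on $K_4$ already shows several rotors turning in one such run). Moreover, Lemma~\ref{lem:unicycles} does not apply off the shelf, since $(\sigma_T,v)$ with a tree rotor function is not a unicycle; your completion $\sigma(q)=e$ would need an argument that the sink dynamics agrees with the unicycle dynamics up to the first return to $q$. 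Second, the matching step --- which you yourself flag as ``the main obstacle'' --- is precisely the theorem. Identifying ``side of a separating cycle'' as the relevant invariant is reasonable, but you give no mechanism converting it into equality of the two resulting trees, nor any interaction with the uniqueness of break divisors (Proposition~\ref{prop:breakdiv}) that would let you recognize $\beta_{(q,e)}(T')$ in the class $[(v)-(q)]+[\beta_{(q,e)}(T)]$. As written, the proposal establishes only the standard reduction to adjacent generators; the theorem itself remains unproved.
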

However, in the case of a non-planar $G$ the situation was not as clear. The work \cite{bw} concludes with the following conjecture: 

\begin{Conjecture}[Conjecture 7.2, \cite{bw}]
Let $G$ be a nonplanar ribbon graph with no multiple edges and no loops. Then there exists a vertex $q$ of $G$ such that $\rho_q\neq\beta_q$, 
\end{Conjecture}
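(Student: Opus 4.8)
The plan is to reduce the Baker--Wang conjecture to two self-contained statements about witness pairs, which together handle every nonplanar ribbon graph. By Proposition \ref{prop:nonplanar}, any connected nonplanar graph endowed with any ribbon structure admits a \emph{proper} witness pair $(C,P)$. So the first and main component of the argument is a theorem---call it Theorem \ref{thm:propwit}---asserting that whenever a ribbon graph $G$ contains a proper witness pair $(C,P)$, there is a vertex $q$ at which $\rho_q \neq \beta_q$. The natural candidate for $q$ is one of the two (distinct!) endpoints of $P$ on $C$, and the natural candidate for a divisor on which the two actions disagree is $[(v)-(q)]$ where $v$ is the other endpoint of $P$. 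To exhibit the disagreement it suffices, by the discussion in Subsection \ref{comparison}, to produce a single spanning tree $T$ for which $\beta_{(q,e)}(\rho_q([(v)-(q)],T)) \neq [(v)-(q)] + \beta_{(q,e)}(T)$, and the obvious tree to try is one built from $C$: take $T$ to contain the arc of $C$ from $v$ to $q$ not using some fixed edge of $C$, together with the witness path $P$ suitably truncated, extended arbitrarily to a spanning tree. One then computes both sides. On the rotor-routing side, the configuration $(\sigma_T, v)$ or a nearby configuration should be (close to) a \emph{unicycle} on the subgraph $C$, so that Lemma \ref{lem:unicycles} and Lemma \ref{lem:rotations} control exactly how the chip traverses $C$ and where it deposits rotors before returning to $q$; the point of the ``left of $C$ / right of $C$'' condition on the witness $P$ is precisely that the first and last edges of $P$ lie on opposite sides of the cycle, so the rotor-routing outcome ``sees'' the cycle in a way a planar embedding could not. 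On the Bernardi side, one runs the tour $\tau_{(q,e)}(T)$ explicitly: the tour traverses the arc of $C$, and the placement of the break-divisor chips relative to the cycle is again governed by which side of $C$ the edges of $P$ sit on. Comparing the two break divisors obtained, they differ exactly because $P$ has its endpoints on opposite sides of $C$.

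The second component handles nonplanar ribbon graphs that admit \emph{no} proper witness pair; this is Theorem \ref{thm:nopropwit}. As remarked after Definition \ref{def:tight}, such a graph always has a \emph{tight} witness pair $(C,P)$, where $C$ and $P$ meet in a single vertex $z$ and the interval between the two edges of $C$ at $z$ is as short as possible among all cycles through $z$ witnessed by $P$. Here the base vertex $q$ should be chosen to be a vertex adjacent to $z$ along $P$ (so $q \notin C$), and the divisor will again be of the form $[(v)-(q)]$ for an appropriate $v$. The minimality built into the definition of ``tight'' is what replaces the distinctness of the endpoints used in the proper case: it guarantees that when the rotor-routing chip reaches $z$, the rotor at $z$ is positioned inside the short interval $[e,f]$, which forces the chip onto $C$ and then around it, and the analogue of Lemma \ref{lem:rotations} prevents any ``shortcut'' that would make the two actions agree. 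One then runs the same two computations---rotor-routing via the unicycle lemmas on $C$, Bernardi via an explicit tour of a tree containing (an arc of) $C$ and (a truncation of) $P$---and shows the resulting break divisors disagree.

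Given these two theorems, the conjecture follows immediately: let $G$ be a nonplanar ribbon graph with no loops. If $G$ has a proper witness pair, apply Theorem \ref{thm:propwit}; if not, apply Theorem \ref{thm:nopropwit}, whose hypothesis (existence of a tight witness pair) is automatically met. Either way we obtain a vertex $q$ with $\rho_q \neq \beta_q$. Note that the ``no multiple edges'' hypothesis of the original conjecture is actually needed only to \emph{guarantee}, via Proposition \ref{prop:nonplanar} and the $K_4$-minor proposition, that one of the two witness-pair situations occurs in a form we can exploit; the witness-pair formulation lets us also capture many ribbon graphs with multiple edges, which is the promised extension. (If the paper instead routes the no-multiple-edge case through Kuratowski to a $K_5$ or $K_{3,3}$ minor, one invokes Proposition \ref{prop:nonplanar} directly, since that proposition already produces a proper witness pair for every nonplanar \emph{graph}, regardless of multiplicities.)

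I expect the main obstacle to be the explicit comparison of the two break divisors in Theorem \ref{thm:propwit}: one must track the Bernardi tour and the rotor-routing trajectory through the cycle $C$ simultaneously and pin down exactly which edge receives the discrepant chip. The unicycle lemmas (Lemmas \ref{lem:unicycles} and \ref{lem:rotations}) give tight control of the rotor side, and the ``left/right of $C$'' bookkeeping is the bridge between the two sides, but keeping the combinatorics of the intervals around $q$, $v$, and the intermediate vertices of $C$ straight---especially when $P$ has length greater than one and may wander off $C$ before returning---will be the delicate part. A secondary subtlety is ensuring the chosen tree $T$ is genuinely a spanning tree and that the arcs/truncations chosen are consistent with $\sigma_T$ pointing toward $q$; this is routine but must be set up carefully so that $(\sigma_T, v)$ really does present the unicycle structure on $C$ that the argument needs.
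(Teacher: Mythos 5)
Your top-level argument is exactly the paper's: combine Proposition \ref{prop:nonplanar} with the dichotomy between graphs admitting a proper witness pair (Theorem \ref{thm:propwit}) and those admitting only a tight witness pair (Theorem \ref{thm:nopropwit}), so the proposal is correct and follows essentially the same route. Two details of your envisioned component proofs differ from what the paper actually does and are worth noting: in the proper-witness case the paper takes $q$ to be a neighbor of $z$ \emph{on the cycle} $C$ (not an endpoint of $P$) and acts by $[(z)-(q)]$ on a tree containing $C\setminus\{z,q\}$ and $P$ minus one edge, so that the rotor-routing is a single step and the unicycle lemmas are not needed there at all (they enter only in the tight-witness case); and the paper requires an additional inductive reduction --- the entire body of the proof of Theorem \ref{thm:propwit} --- to handle the situation where the two edges of $C$ at $z$ are not adjacent in the ribbon structure about $z$, a step your sketch does not anticipate.
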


\section{Baker-Wang Conjecture and proper witnesses}\label{main}

We now prove the Baker-Wang conjecture in the case that the graph $G$ admits a proper witness pair. This will be done by first proving the conjecture in a more restricted situation. 

\begin{Lemma}\label{lem:propwit}
Suppose that $G$ is a ribbon graph with proper witness pair $(C,P)$ such that $P$ has endpoints $x$ and $z$. If the two edges $e_0$ and $e_1$ of $C$ incident to $z$ satisfy $e_0\prec e_1$, then there exists a vertex $q$ of $G$ such that $\rho_q\neq\beta_q$.
\end{Lemma}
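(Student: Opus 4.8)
The plan is to take the base vertex to be $q=z$, and to exhibit a single spanning tree $T$ together with one generator $[(v)-(z)]$ on which $\rho_z$ and $\beta_z$ disagree, with $v=u$ the endpoint of $e_0$ other than $z$. Both computations are run against a tree adapted to the theta-subgraph $C\cup P$. First orient $C$: since the last edge $p_z$ of $P$ lies on the right of $C$ at $z$, i.e.\ in $[e_{\mathrm{in}},e_{\mathrm{out}}]$, and $e_0\prec e_1$, we are forced to have $e_{\mathrm{out}}=e_0$ and $e_{\mathrm{in}}=e_1$; so $C$ leaves $z$ along $e_0$ and returns along $e_1$. Then take $T$ to be any spanning tree containing $C\smallsetminus\{e_0\}$ and $P\smallsetminus\{p_z\}$ in which $z$ is a leaf whose unique incident edge is $e_1$. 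This is possible because $(C\smallsetminus e_0)\cup(P\smallsetminus p_z)$ is already a tree on $V(C)\cup V(P)$ with $z$ a leaf, and the remaining vertices of $G$ can be attached to it without adding a further edge at $z$.

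For the rotor-routing side, put the sink at $z$ and start from $(\sigma_T,u)$. Because $z$ is a leaf, $\sigma_T$ is completely determined on $V(C)\smallsetminus\{z\}$ (each such vertex rotates to the edge by which $C$ leaves it), so the function $\hat\sigma$ that additionally sets $\hat\sigma(z)=e_0$ makes $(\hat\sigma,u)$ a unicycle whose unique directed cycle is $C$. Since the chip never consults the rotor at the sink before landing there, the rotor-routing run with sink $z$ from $(\sigma_T,u)$ coincides step for step with the sink-free run from the unicycle $(\hat\sigma,u)$ up to the first time the chip reaches $z$, and $\rho_z([(u)-(z)],T)$ is read off from the configuration at that instant. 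I would then use Lemma \ref{lem:unicycles} to describe this sink-free run globally (each edge crossed once in each direction, each rotor making one full turn) and Lemma \ref{lem:rotations} to fix the states of the rotors at $x$ and at $z$ as the chip passes them --- this is exactly where ``$p_x$ on the left, $p_z$ on the right of $C$'' is used. I expect the output tree $T':=\rho_z([(u)-(z)],T)$ to contain the edge $e_0$.

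For the Bernardi side, \cite[Theorem~4.1]{bw} lets me compute with any edge incident to $z$, so I take $e=e_1$; then the Bernardi tour of $T$ based at $z$ originates at the reference edge $e_0$ and, since $e_0\prec e_1\in T$, leaves $z$ immediately along $e_1$. Following the tour around $G$ and using $T\supseteq(C\smallsetminus e_0)\cup(P\smallsetminus p_z)$ to see where it passes the edges not in $T$, I would write $\beta_{(z,e_1)}(T)$ explicitly as a break divisor, add $[(u)-(z)]$, and locate the unique break divisor in the new class via Proposition \ref{prop:breakdiv}; I expect the spanning tree $T''$ it represents not to contain $e_0$. Since then $e_0\in T'$ while $e_0\notin T''$, we get $\rho_z([(u)-(z)],\,\cdot\,)\neq\beta_z([(u)-(z)],\,\cdot\,)$ as permutations of $\mathcal T(G)$, hence $\rho_z\neq\beta_z$.

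The main obstacle is the rotor-routing computation: the chip's trajectory runs through vertices of $G$ whose ribbon structure is arbitrary, so one cannot follow it step by step. Passing to the unicycle $(\hat\sigma,u)$ is what makes it tractable --- Lemma \ref{lem:unicycles} fixes the combinatorial type of the trajectory with no reference to those ribbon structures, and Lemma \ref{lem:rotations}, which is exactly suited to this situation, fixes the relative order in which the rotors at $x$ and at $z$ sweep past the witness edges $p_x$ and $p_z$. That the witness path meets $C$ on opposite sides at its two ends is precisely what forces this order to ``carry the missing edge $e_0$ around $C$'' in the rotor-routing output, whereas the Bernardi tour, whose behaviour at $z$ is pinned by $e_0\prec e_1$, leaves $e_0$ out; reconciling the two descriptions on the single edge $e_0$ is the heart of the matter.
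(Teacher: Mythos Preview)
Your plan inverts the roles that make the paper's argument short, and this is where it goes wrong. The paper does \emph{not} put the sink at $z$; it takes $q$ to be the neighbour of $z$ along $e_1$ and acts by $[(z)-(q)]$ on a tree $T\supseteq (C\setminus e_1)\cup(P\setminus e')$ with $e'$ an interior edge of $P$. Then $\sigma_T(z)=e_0$, and since $e_0\prec e_1$ a \emph{single} rotor-routing step carries the chip from $z$ to $q$, giving $T'=(T\setminus e_0)\cup e_1$. No unicycle analysis is needed; Lemmas~\ref{lem:unicycles} and~\ref{lem:rotations} are not used here at all. The comparison is then between two break divisors for the \emph{same} tree $T'$: the paper checks directly that $(z)-(q)+\beta_{(q,e_1)}(T)$ is a break divisor for $T'$, and that it differs from $\beta_{(q,e_1)}(T')$ at the endpoints of $e'$ because the tours of $T$ and $T'$ visit the two halves of $P$ in opposite orders. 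Proposition~\ref{prop:breakdiv} finishes.

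Your route has real gaps beyond being more complicated. First, you cannot in general make $z$ a leaf of $T$: nothing in the hypotheses prevents $z$ from being a cut vertex, in which case every spanning tree has at least two edges at $z$ and your tree does not exist. Second, with the sink at $z$ there is no rotor at $z$, so Lemma~\ref{lem:rotations} does not govern anything at the sink; and Lemma~\ref{lem:unicycles} only tells you what happens after the full $2m$-step period, not where the rotor at $u$ sits at the first hitting time of $z$, which is precisely what you need to decide whether $e_0\in T'$. Your ``I expect $e_0\in T'$'' and ``I expect $e_0\notin T''$'' are thus unsupported, and on the Bernardi side you never actually produce $T''$ (you would have to invert $\beta_{(z,e_1)}$ on a translated class, not just read off membership of a single edge). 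The paper's choice of $q$ adjacent to $z$ is exactly what collapses the rotor-routing computation to one step and lets the whole comparison happen between two explicit break divisors for one tree.
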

\begin{proof}
Suppose that $e_1=\{z,q\}$, so that $q$ is the other vertex of $e_1$ in $C$. Now, let $e'=\{u,v\}$ be an edge of $P$ and extend the collection of edges $(C\setminus e_1)\cup(P\setminus e')$ to a spanning tree $T$ of $G$. We will show that 
	\[\rho_q([(z)-(q)],T) \neq\beta_q([(z)-(q)],T)\] 
In particular, denoting by $T'$ the spanning tree $\rho_q([(z)-(q)],T)$, we show that the divisor classes $[(z)-(q)] + [\beta_{q,e_1}(T)]$ and $[\beta_{q,e_1}(T')]$ have distinct break divisor representatives. By the definition of the Bernardi action $\beta_q$, we immediately obtain $\rho_q\neq\beta_q$. 

\begin{figure}
\begin{center}
\includegraphics[width=8cm, height=5cm]{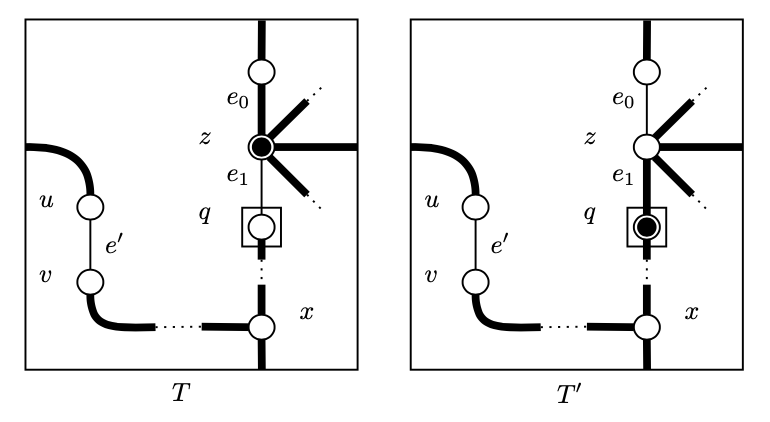} 
\end{center}
\caption{Rotor-routing process applied to the spanning tree $T$, thickened at the left, as in the proof of Lemma \ref{lem:propwit}. After one step the chip is moved from $z$ to $q$ and the process is terminated, yielding the thickened tree $T'$ at the right.}
\label{fig:TTprime}
\end{figure}

Acting on $T$ with $\rho_q([(z)-(q)])$ entails only a single step of the rotor routing process: changing the rotor at $z$ from $e_0$ to $e_1$ and moving the chip to $q$, terminating the process. As a result, the spanning trees $T$ and $T'$ differ only by a single edge. In particular, $T' = (T\setminus e_0)\cup e_1$. This situation is depicted in Figure \ref{fig:TTprime}. 

Consider the divisors $(z)-(q) + \beta_{q,e_1}(T)$ and $\beta_{q,e_1}(T')$. By the definition of the Bernardi bijection, we know that $\beta_{q,e_1}(T')$ is a break divisor for $T'$. On the other hand, the divisor $(z)-(q)+\beta_{q,e_1}(T)$ is also a break divisor for $T'$, distinct from $\beta_{q,e_1}(T')$. Since a break divisor for a spanning tree is a formal sum of endpoints of edges outside that tree, we can see the following:
	\[(z)-(q) + \beta_{q,e_1}(T) = (z)-(q) + (q) + \sum_{\substack{e\notin T \\ e\neq e_0}} s(e) = (z) + \sum_{\substack{e\notin T' \\ e\neq e_1}} s(e)\]
Since $z$ is an endpoint of $e_1$, this shows that $(z)-(q)+\beta_{q,e_1}(T)$ is a break divisor for $T'$. 

We are reduced to showing that $(z)-(q)+\beta_{q,e_1}(T)$ and $\beta_{q,e_1}(T')$ are distinct break divisors. By the construction of $T$, the collection of edges $P\cap T$ consists of two connected components $P_z$ and $P_x$, containing $z$ and $x$, respectively. In the Bernardi tour of $T$, $P_x$ is traversed before $P_z$, while the opposite is true for the tour of $T'$. This results in different placement of the chip associated to the edge $e'$ in the two tours: without loss of generality, the $T$ tour places a chip at $v$ while the $T'$ tour places a chip at $u$; this difference is depicted in Figure \ref{fig:difference}. Since $(z)-(q) + \beta_{q,e_1}(T)$ now differs from $\beta_{q,e_1}(T')$ in at least one place and both are break divisors, they cannot be representative of the same element of $\Pic^0(G)$. 
\end{proof}

\begin{figure}
\begin{center}
\includegraphics[width=8cm, height=5cm]{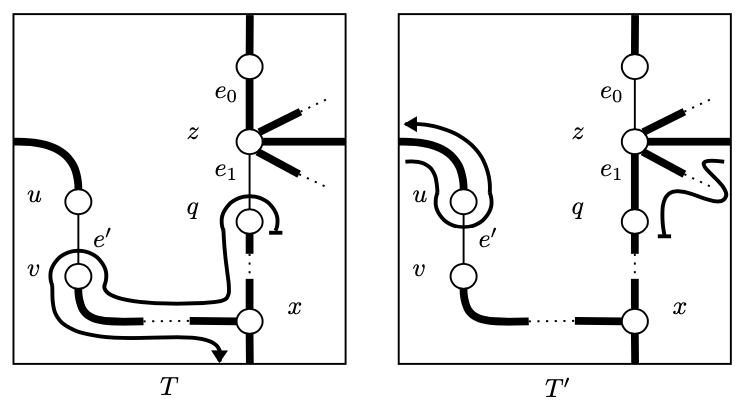} 
\end{center}
\caption{Bernardi process with initial data $(q,e_1)$ applied to the spanning trees $T$ and $T'$ as in the proof. For simplicity, only the relevant parts of the tours are included in the figure. At the left, the tour cuts $e'$ at $u$, while at the right $e'$ is cut at $v$.}
\label{fig:difference}
\end{figure}

\vspace{0.2cm}

For a ribbon graph $G$ admitting a proper witness pair, we either have $z$, $e_0$, and $e_1$ as in Lemma \ref{lem:propwit} above, or there are some edges $f_1,\ldots,f_k$ incident to $z$ such that 
\[e_0\prec f_1 \prec \cdots\prec f_k\prec e_1\]
We now prove that in the latter case, we can still find vertices $q'$ and $z'$ and an edge $e$ for which the  break divisors $\beta_{q',e}(T')$ and $(z')-(q') + \beta_{q',e}(T)$ still differ as described.

\begin{Theorem}\label{thm:propwit}
For any nonplanar ribbon graph $G$ admitting a proper witness pair $(C,P)$, there exists a vertex $q$ for which $\rho_q\neq \beta_q$. 
\end{Theorem}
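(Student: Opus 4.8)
The plan is to reduce the statement to Lemma~\ref{lem:propwit} by replacing the given pair $(C,P)$, when necessary, with another proper witness pair to which that lemma applies. Fix an orientation of $C$ and direct $P$ from $z$ to $x$, so that the first edge of $P$ lies to the left of $C$ at $z$ and the last edge of $P$ lies to the right of $C$ at $x$; note that reversing both orientations yields another proper witness pair $(C^{\op},P^{\op})$ in which the roles of the two endpoints of $P$ are swapped. The two edges of $C$ incident to $z$ admit a labeling $e_0\prec e_1$ (the hypothesis of Lemma~\ref{lem:propwit}) if and only if $z$ has no edge strictly to the right of $C$, and symmetrically at $x$ with ``left'' in place of ``right''. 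Hence if $z$ has no edge to the right of $C$, or $x$ has no edge to the left of $C$, then Lemma~\ref{lem:propwit} applies directly to $(C,P)$ or to $(C^{\op},P^{\op})$ and we are done; so we may assume $z$ has an edge strictly to the right of $C$ and $x$ has an edge strictly to the left of $C$.

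I will then use surgery to pass from $(C,P)$ to a proper witness pair with strictly fewer such obstructing edges, and induct on their number. Writing $e_1=e_{\text{out}}(z)$, let $f$ be the edge immediately preceding $e_1$ in the ribbon order at $z$; by assumption $f$ is obstructing. Grow a path $R$ out of $z$ starting with $f$, internally disjoint from $C$, and ending at a vertex $w\in V(C)$ (possible since $G$ is connected). Adjoining $R$ to $C$ produces a theta subgraph, hence two cycles besides $C$, each containing $R$; one of these, $\widehat C$, is again nonseparating, and $P$ (or a suitable truncation of it) is a proper witness for $\widehat C$. Moreover, at $z$ the two edges of $\widehat C$ are $f$ together with either $e_1$ --- in which case $\widehat C$ has no obstructing edge at $z$ and Lemma~\ref{lem:propwit} applies at once --- or the other $C$-edge at $z$, in which case $\widehat C$ has exactly one fewer obstructing edge at $z$ than $C$, and we recurse. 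Running this at $z$, and symmetrically at $x$, decreases the number of obstructing edges at every stage, so after finitely many surgeries we reach a proper witness pair to which Lemma~\ref{lem:propwit} applies, completing the proof.

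The main obstacle is the surgery step, in particular controlling the interaction of the detour $R$ with the witness path $P$. When $R$ can be taken internally disjoint from $P$ with $w\in V(C)$, checking that $\widehat C$ is nonseparating reduces to verifying that the reroute preserves which side of the cycle the first and last edges of the (truncated) witness lie on --- a finite, if fiddly, bookkeeping. The cases that require the most care are those in which every path from the far end of $f$ back to $C$ is forced through the interior of $P$ (then one must splice $R$ with a segment of $P$ and re-examine the left/right positions along the resulting cycle), and the degenerate situations in which $f$ is a bridge or $w$ coincides with an endpoint of $P$ (handled by contracting $f$ or by a small separate argument). Showing that the chosen new cycle $\widehat C$ is genuinely nonseparating --- equivalently, exhibiting a witness for it --- while strictly decreasing the obstruction count, so that the induction terminates, is the heart of the argument.
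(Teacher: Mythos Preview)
Your strategy---induct on the number of ``obstructing'' edges at the endpoints of $P$ and perform surgery to pass to a proper witness pair with fewer such edges---is exactly the paper's approach. The paper phrases it slightly differently: rather than building a theta graph, it looks at the connected component $F$ of $G\setminus z$ containing the far endpoint of $f$ and branches on whether $F$ meets $C$, meets $P$, or meets neither; in the first two cases it writes down the new cycle explicitly and checks that the witness survives. But the underlying reduction is the same, and the induction is on the same quantity.

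There is, however, a genuine gap in your sketch. You assert that one can always ``grow a path $R$ out of $z$ starting with $f$, internally disjoint from $C$, and ending at a vertex $w\in V(C)$'', and you propose to handle the failure of this only when $f$ is a bridge. But the obstruction is not that $f$ is a bridge: it is that the component $F$ of $G\setminus z$ containing the far end of $f$ may be disjoint from $C$ (and from $P$) even when $f$ is not a bridge---for instance when several edges from $z$ lead into the same dangling piece. In that case every path from the far end of $f$ back to $C$ must re-enter through $z$ itself, so no $R$ of the kind you describe exists, and contracting $f$ neither resolves the issue nor obviously transports the conclusion $\rho_q\neq\beta_q$ back from the minor to $G$. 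The paper handles this case differently: it observes that such a dangling component plays no role in the break-divisor comparison carried out in the proof of Lemma~\ref{lem:propwit}, so one may simply ignore $f$ and continue the induction as if it were not there. You need an argument of this sort to close the case.

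A secondary point: your claim that ``one of these, $\widehat{C}$, is again nonseparating, and $P$ (or a suitable truncation of it) is a proper witness for $\widehat{C}$'' is the crux and cannot be left as bookkeeping. The paper avoids the ambiguity by making the choice explicitly---when $F$ meets $C$ it replaces the arc of $C$ on the appropriate side so that the new cycle edges at $z$ are $e_0$ and $f$ with $e_0\prec f$ (base case reached immediately), and when $F$ meets $P$ it splices differently so that the cycle edges become $f$ and $e_1$ (one fewer obstructing edge). Emulating this case split, rather than appealing to an unproven ``one of the two theta-cycles works'', makes both the nonseparating check and the termination of the induction transparent.
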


\begin{proof} 
Suppose that the proper witness pair $(C,P)$ such that the path $P$ intersects $C$ in vertices $x$ and $z$. If this witness pair satisfies $e_0\prec e_1$ in the ribbon structure about $z$, then we are finished. Suppose also that the pair $(C,P)$ fails to meet the conditions of Lemma \ref{lem:propwit}. In particular, there exists an edge $f=\{z,v\}$ such that $f\in [e_0,e_1]$ and $f\neq e_0,e_1$. Let $G'$ be the subgraph of $G$ obtained by deleting the vertex $z$ and all edges to which it is incident. Denote by $F$ the connected component of the subgraph $G'$ containing $v$. See Figure \ref{fig:extraedge}. Note in particular that $F$ may intersect (and therefore contain) the cycle $C$ or the path $P$. 

\begin{figure}
\begin{center}
\includegraphics[width=7cm, height=5cm]{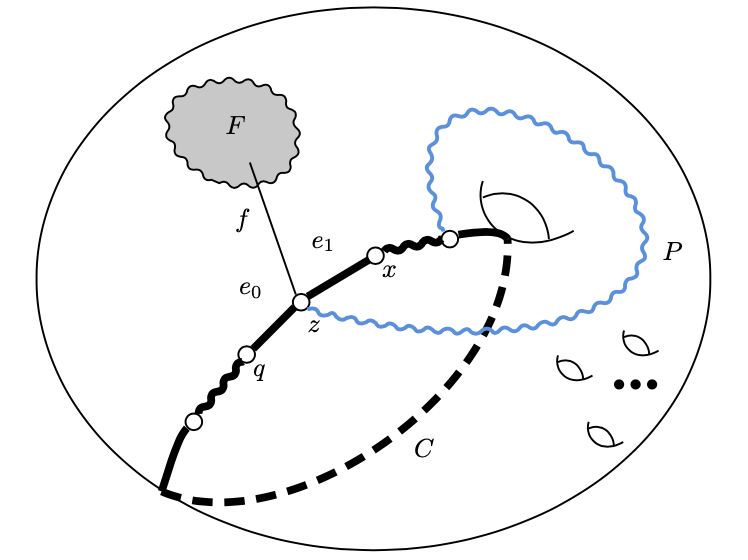} 
\end{center}
\caption{Proper witness pair $(C,P)$ in a graph embedded on a surface of positive genus. Supposing that there is an edge $f$ incident to $z$ with $e_0\prec f \prec e_1$, we can identify the component $F$ of $G\setminus z$ and consider its possible intersection with the cycle $C$ and the path $P$. Wavy lines represent paths that may contain additional vertices and edges.}
\label{fig:extraedge}
\end{figure}

If the component $F$ meets neither $C$ nor $P$, then one can ignore the edge $f$, applying the same reasoning as in Lemma \ref{lem:propwit}: the rotor-routing process carried out in the subgraph $F$ will not influence the rotors on the path $P$, so the break divisors $(z)-(q) + \beta_{q,e_1}(T)$ and $\beta_{q,e_1}(T')$ will still differ as described in the proof. We therefore assume that there is no such edge $f$ with $F\cap C$ and $F\cap P$ empty.

For each edge $f_i=\{z,v_i\}\in [e_0,e_1]$ such that $f_i\neq e_0,e_1$, suppose that $F_i$ is the connected component of $G'$ containing $v_i$. We prove the result by induction on the number $t$ of such edges $f_i$. By Lemma \ref{lem:propwit}, we have already established the base case $t=0$. Suppose then that for all $\ell\leq t-1$ we can find some vertex $q$ such that $\rho_q\neq\beta_q$.

Let $f=\{z,v\}$ denote the first edge after $e_0$ in $[e_0,e_1]$, and let $F$ denote the connected component of $G'$ containing $v$. In the event that $F$ intersects either the cycle $C$ or the path $P$, we will incorporate the edge $f$ into either a nonseparating cycle or a proper witness of a nonseparating cycle in such a way that we can apply the Lemma \ref{lem:propwit}. If $F$ intersects both, then either approach will suffice. 

\begin{itemize}[leftmargin=*]
\item Suppose $c$ is a vertex in the intersection of $F$ and $C$, so that we can extend $f=\{z,v\}$ to a path $Q$ from $z$ to $c$. Note that by construction there exists a path $C_{c,z}$ from $c$ to $z$ within the cycle $C$. Define a cycle $C':=(C\setminus C_{c,z})\cup Q$; it is necessarily nonseparating with the same proper witness path $P$. Now $f$ and $e_0$ are the edges of $C'$ incident to $z$ and $e_0\prec f$ around $z$, so that we can now apply Lemma \ref{lem:propwit} with proper witness pair $(C',P)$, allowing the vertex $v$ to fill the role of $q$. See Figure \ref{fig:FC}. 

\item Suppose that $x'$ is a vertex in the intersection of $F$ and $P$, then we can extend $f=\{z,v\}$ to a path $Q$ from $z$ to $x'$. There already exists a path in $P$ from $x'$ to $x$; let $P_{x,x'}$ denote this path and let $C_{z,x}$ denote the path in $C$ from $z$ to $x$. Then the cycle $C':=(C\setminus C_{z,x})\cup Q\cup P_{x,x'}$ is nonseparating with witness path $P':=P\setminus P_{x,x'}$. Moreover, $f$ and $e_1$ are the edges of $C'$ incident to $z$ and $f\prec e_1$ around $z$, so that we can now apply Lemma \ref{lem:propwit} with proper witness pair $(C',P')$. See Figure \ref{fig:FP}. \qed
\end{itemize}

\end{proof}

\begin{figure}
\begin{center}
\includegraphics[width=11cm, height=5cm]{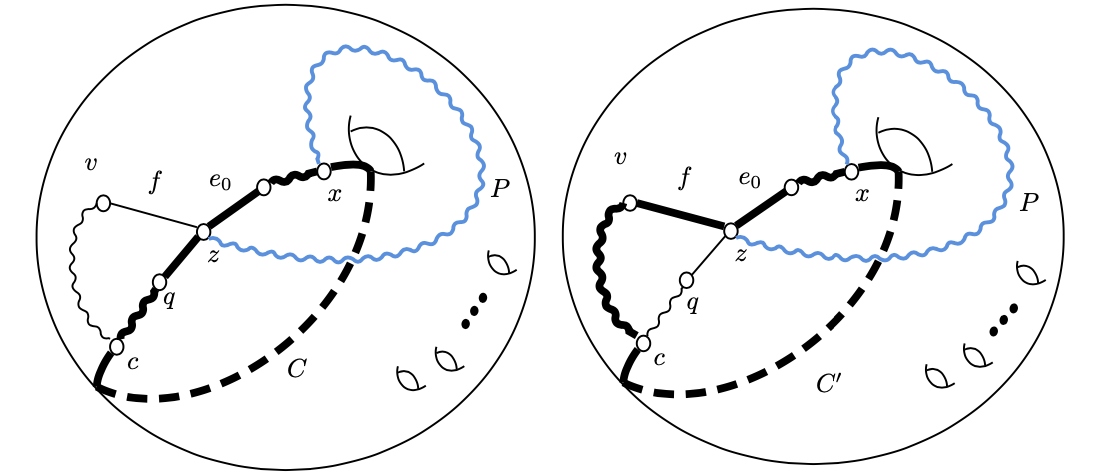} 
\end{center}
\caption{Situation in the proof of Theorem \ref{thm:propwit} where the intruding edge $f$ leads to a path to $C$. The new proper witness pair $(C',P)$ is shown in the image at the right.}
\label{fig:FC}
\end{figure}

\begin{figure}
\begin{center}
\includegraphics[width=11cm, height=5cm]{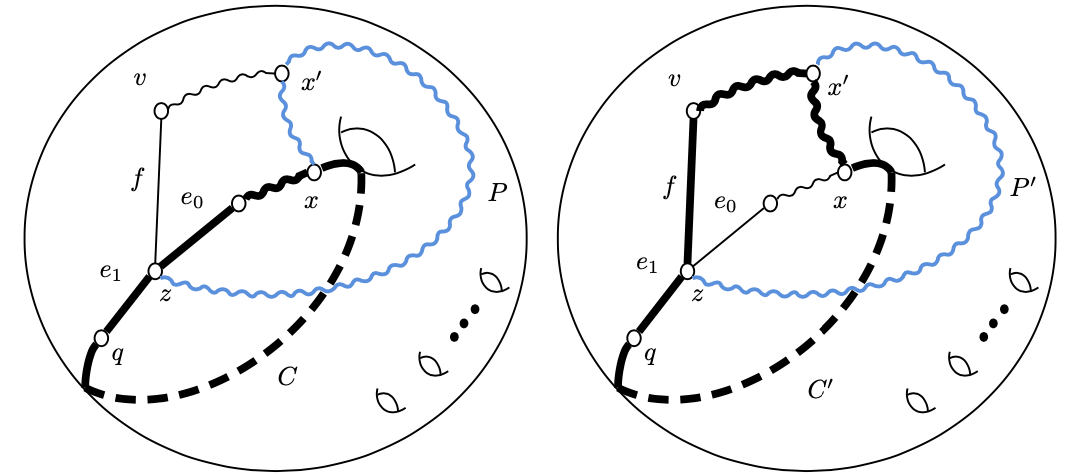} 
\end{center}
\caption{Situation in the proof of Theorem \ref{thm:propwit} where the intruding edge $f$ leads to a path to $P$. The new proper witness pair $(C',P)$ is shown in the right figure.}
\label{fig:FP}
\end{figure}

Considering Proposition \ref{prop:nonplanar} alongside the theorem above, we immediately obtain the following corollary:

\begin{Corollary}
For a nonplanar graph $G$ endowed with any ribbon structure, there exists some vertex $q$ of $G$ such that $\rho_q\neq\beta_q$.
\end{Corollary}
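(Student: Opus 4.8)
The plan is to obtain the statement as an immediate consequence of the two main results of this section, \emph{chained in sequence}. Let $G$ be a nonplanar graph equipped with an arbitrary ribbon structure. The first step is to apply Proposition \ref{prop:nonplanar}: since $G$ is connected and nonplanar, any ribbon structure on it admits a \emph{proper} witness pair $(C,P)$. The second step is then to feed this pair into Theorem \ref{thm:propwit}, which guarantees the existence of a vertex $q$ of $G$ with $\rho_q \neq \beta_q$. That is the whole argument.

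The one point that deserves a word of care is the distinction, emphasized in Section \ref{graphs}, between a \emph{nonplanar graph} (the underlying graph is nonplanar in the usual topological sense) and a \emph{nonplanar ribbon graph} (the ribbon graph contains a nonseparating cycle). Theorem \ref{thm:propwit} is phrased for nonplanar ribbon graphs, so one should note that these hypotheses are compatible here: a nonplanar graph endowed with any ribbon structure is a nonplanar ribbon graph. In fact this is already built into the previous step, since the proper witness pair $(C,P)$ produced by Proposition \ref{prop:nonplanar} is in particular a witness pair, so $C$ is a nonseparating cycle and the ribbon graph is nonplanar in the sense required by Theorem \ref{thm:propwit}. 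No further bridging argument is needed.

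Consequently there is no genuine obstacle to overcome in this corollary; the substance of the result lives entirely in Proposition \ref{prop:nonplanar} and Theorem \ref{thm:propwit}, and what remains is only to record that they compose. I would present the proof in one or two sentences: by Proposition \ref{prop:nonplanar}, $G$ with its ribbon structure admits a proper witness pair $(C,P)$; applying Theorem \ref{thm:propwit} to $(C,P)$ yields a vertex $q$ with $\rho_q \neq \beta_q$.
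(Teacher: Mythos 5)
Your proof is correct and is exactly the paper's argument: the corollary is stated there as an immediate consequence of Proposition \ref{prop:nonplanar} combined with Theorem \ref{thm:propwit}. Your extra remark that a nonplanar graph under any ribbon structure is a nonplanar ribbon graph (so the hypotheses of Theorem \ref{thm:propwit} apply) is a correct and worthwhile clarification.
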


\section{Simple graphs without proper witnesses}\label{nopropwit}

We next show that for graphs $G$ without multiple edges or loops that do not contain a proper witness pair, there is some vertex $q$ for which $\rho_q\neq \beta_q$. This completes the proof of the Baker-Wang conjecture. 

\begin{Theorem}\label{thm:nopropwit}
Let $G$ denote a nonplanar ribbon graph with no multiple edges and no loops such that $G$ does not admit a proper witness pair. Then there is some vertex $q$ of $G$ such that $\rho_q\neq\beta_q$.
\end{Theorem}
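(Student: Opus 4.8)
The plan is to transplant the proof of Lemma~\ref{lem:propwit} and Theorem~\ref{thm:propwit} onto a \emph{tight} witness pair $(C,P)$, which exists for such $G$ by the remark following Definition~\ref{def:tight}. The new feature is that $C$ and $P$ now share a single vertex $z$ rather than two, so the rotor-routing move we play with at $z$ is not a single step and the Bernardi comparison must be organized entirely around $z$. Orient $C$ by the unique orientation for which $P$ is a witness, let $e_0 = e_{\text{in}}$ and $e_1 = e_{\text{out}}$ be the two edges of $C$ at $z$, and let $[e_0, e_1]$ be the minimal interval furnished by tightness, with interior edges $e_0 \prec f_1 \prec \cdots \prec f_k \prec e_1$. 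Since the last edge $p_1$ of $P$ is on the right of $C$ we have $p_1 \in \{f_1, \dots, f_k\}$, while the first edge $p_0$ of $P$ is on the left of $C$, so $p_0 \notin [e_0, e_1]$.

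First I would reduce to the situation in which the only interior edge is $p_1$, i.e.\ $e_0 \prec p_1 \prec e_1$. For each $f_i \neq p_1$ let $F_i$ be the component of $G \setminus z$ containing the far endpoint of $f_i$. If $F_i$ met $C$, then rerouting $C$ through $F_i$ exactly as in the first bullet of the proof of Theorem~\ref{thm:propwit} would produce a cycle through $z$, still witnessed by $P$, whose interval at $z$ is a strict sub-interval of $[e_0, e_1]$, contradicting tightness. If $F_i$ met $P$ at a vertex $x' \neq z$, then $F_i$ is the component carrying $P \setminus z$, and splicing $f_i$ to a path inside $F_i$ reaching $x'$ and then following the appropriate arc of $P$ back to $z$ produces a cycle $C''$ through $z$ together with the complementary arc of $P$, which joins the \emph{distinct} vertices $z$ and $x'$; here the hypothesis that $G$ has no multiple edges guarantees $C''$ is a genuine cycle, and verifying the left/right conditions as in the second bullet of that proof shows $(C'',\,\text{that arc})$ is a \emph{proper} witness pair, contrary to hypothesis. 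Hence every such $F_i$ is disjoint from $C \cup P$, and (as in Theorem~\ref{thm:propwit}) rotor-routing excursions and Bernardi-tour visits confined to these inert components affect neither the rotor at $z$ nor the placement of the chip we will track, so we may argue as if they were absent.

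In the reduced situation I would mimic Lemma~\ref{lem:propwit}. Write $e_1 = \{z, q\}$, pick an interior edge $e' = \{u, v\}$ of $P$ so that $P \setminus e'$ splits at $z$ into an arc $P^-$ through $p_0$ ending at $u$ and an arc $P^+$ through $p_1$ ending at $v$, and extend $(C \setminus e_1) \cup (P \setminus e')$ to a spanning tree $T$. Since the unique $z$-to-$q$ path in $T$ is $C \setminus e_1$ we have $\sigma_T(z) = e_0$, so acting by $[(z) - (q)]$ advances the rotor at $z$ to $p_1$, drives the chip along $p_1$ on an excursion through the component $F_P$ carrying $P \setminus z$ back to $z$, and then advances the rotor at $z$ to $e_1$ and leaves the chip at $q$; call the result $T'$. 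As in Lemma~\ref{lem:propwit}, both $\beta_{q,e_1}(T')$ and $(z) - (q) + \beta_{q,e_1}(T)$ are break divisors for $T'$ --- for the Bernardi tour of $T$ with initial data $(q, e_1)$ the chip of $e_1$ is placed at $q$, and $z$ is an endpoint of $e_0 \notin T'$ --- so it suffices to see they are distinct. Comparing the two tours, the passage of the rotor at $z$ from $e_0$ past $p_1$ to $e_1$ reverses the order in which the tour of $T$ versus the tour of $T'$ first descends from $z$ into $P^-$ and into $P^+$; consequently the chip recording the omitted edge $e'$ is deposited at $v$ in one tour and at $u$ in the other. The two break divisors then disagree while both have degree $g$, so $\beta_q([(z) - (q)], T) \neq \rho_q([(z) - (q)], T)$, and hence $\beta_q \neq \rho_q$.

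The hard part, which has no counterpart in Lemma~\ref{lem:propwit}, is controlling the chip's excursion through $F_P$: acting by $[(z) - (q)]$ is not a single rotor step, so $T'$ need not equal $(T \setminus e_0) \cup e_1$, and the break-divisor identity and the tour comparison above rest on knowing that $T'$ still contains $P^-$ and $P^+$ and differs from $T$ only in a way confined to $F_P$ together with the swap $e_0 \leftrightarrow e_1$. Establishing this --- including the bookkeeping in the case where $C$ and $P$ lie in the same component of $G \setminus z$, so that the chip can reach $q$ without re-entering $z$ --- is where I expect Lemma~\ref{lem:rotations} and the unicycle Lemma~\ref{lem:unicycles} to be used to track the excursion, and where the choice of the interior edge $e'$ and the inertness established in the reduction step do their work. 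Organizing the whole proof as an induction on $k$, parallel to the proof of Theorem~\ref{thm:propwit}, should keep the bookkeeping tractable.
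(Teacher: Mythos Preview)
Your proposal has a genuine gap precisely at the ``hard part'' you flag, and I do not think it can be closed along the lines you sketch. Once the rotor at $z$ advances to $p_1$ and the chip enters the component $F_P$, the rotors inside $F_P$ move in an essentially uncontrolled way before the chip returns to $z$; the resulting tree $T'$ agrees with $T$ outside $F_P\cup\{z\}$ but can differ from $T$ arbitrarily inside $F_P$. Two consequences break your argument. First, your claim that $(z)-(q)+\beta_{q,e_1}(T)$ is a break divisor for $T'$ is borrowed from Lemma~\ref{lem:propwit}, where it holds only because $T'=(T\setminus e_0)\cup e_1$; here that equality fails, so the edges outside $T'$ are not those outside $T$ with $e_1$ swapped for $e_0$, and the computation no longer yields a break divisor for $T'$. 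Second, even granting that, your comparison hinges on the single interior edge $e'\in P$ --- but $e'$ lies in $F_P$, so after the excursion it may well belong to $T'$, in which case there is no chip at $e'$ to track; and even if $e'\notin T'$, the Bernardi tour of $T'$ inside $F_P$ follows a different tree than $T$ does, so there is no reason the chip lands on the expected endpoint. Lemmas~\ref{lem:unicycles} and~\ref{lem:rotations} constrain the order in which certain rotors complete full turns, but they do not pin down which edges end up in $T'$ away from $z$, which is what your $e'$-argument needs.

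The paper avoids this difficulty by organizing the comparison entirely at $z$. It builds $T$ so that \emph{every} edge incident to $z$ except $e=\{z,q\}$ lies in $T$; then $\beta_{(q,e)}(T)$ has value $0$ at $z$, and one checks directly that $(z)-(q)+\beta_{(q,e)}(T)=\beta_{(q,e')}(T)$ is itself a break divisor with value exactly $1$ at $z$. For the other side, the paper restricts the rotor configuration to the subgraph $H$ induced by all witness paths through $z$ and applies Lemma~\ref{lem:rotations} there to show that the rotor at $z$ reaches $e$ before any neighbour $v\in R_H$ completes a full rotation; hence $T'=\rho_q([(z)-(q)],T)$ omits at least one edge $\{z,v\}$ with $v\in R_H$, so the Bernardi tour of $T'$ deposits at least two chips at $z$. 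The inequality $\beta_{(q,e)}(T')\neq (z)-(q)+\beta_{(q,e)}(T)$ then follows by comparing values at $z$, with no need to understand $T'$ inside $F_P$. In short, the paper's key move is to compare the break divisors at the single vertex $z$ rather than at an interior edge of $P$; this is exactly what lets the unicycle lemmas do their work without the detailed excursion control your approach would require.
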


\begin{proof}
As in the proof of Lemma \ref{lem:propwit}, we will construct a spanning tree $T$ such that the Bernardi and rotor-routing torsors disagree on $T$. 

In order to construct the spanning tree $T$ and to deduce disagreement of the two torsors at the desired vertex $q$, we must first fix some terminology regarding our construction. It is recommended that the reader reference Figure \ref{fig:LRH} while we introduce the required terminology.
\begin{enumerate}

\item Since $G$ is a nonplanar ribbon graph with no proper witness pair, there exists at least one tight witness pair (Definition \ref{def:tight}). Suppose $(C,P)$ is one such pair and that $z$ is the unique vertex in the intersection of $P$ and $C$.

\item Suppose that $q$ is a neighbor of $z$ in $C$. Orient $C$ from $q$ to $z$. We may assume that the interval on the left of $C$ is the minimal interval described in the definition of a tight witness pair. Let $e$ be the edge $\{z,q\}$ and let $e'$ be the edge immediately succeeding $e$ in the ribbon structure about $q$. 

\item Partition all neighbors $y$ of $z$ outside the cycle $C$ into two classes $L$ and $R$ so that $y\in L$ if and only if $\{y,z\}$ is on the left of $C$ and $y\in R$ if and only if $\{y,z\}$ is on the right of $C$.  

\item Let $Z\subseteq E(G)$ denote the collection of edges incident to $z$ with endpoints in $L$ and $R$. We can extend the collection of edges $C\cup Z\setminus \{e\}$ to a spanning tree $T$ so that all edges incident to $z$ except $e$ are included in the tree $T$. 

Let $H$ be the subgraph of $G$ induced by all witness paths for $C$ that intersect $C$ at $z$. Further, let $L_H\subseteq L$ and $R_H\subseteq R$ denote the vertices of $H$ which lie in $L$ and $R$, respectively. 

\item Let $w_0$ denote the vertex in $L_H$ such that $\{z,w_0\}$ is the first edge in $H$ to the left of $C$. Likewise, let $v_0$ denote the vertex in $R_H$ such that $\{z,v_0\}$ is the first edge to the right of $C$. 
 
 \item Consider the vertices of $w\in L\setminus L_H$. As in the proof of Theorem \ref{thm:propwit}, let $G'$ denote the subgraph of $G$ obtained by removing the vertex $z$ and all edges incident to $z$. Let $W$ denote the connected component of $G'$ containing $w$. Since $(C,P)$ is a tight witness pair,  $W$ does not intersect the cycle $C$.  
 \end{enumerate}

\begin{figure}
\begin{center}
\includegraphics[width=8cm, height=6cm]{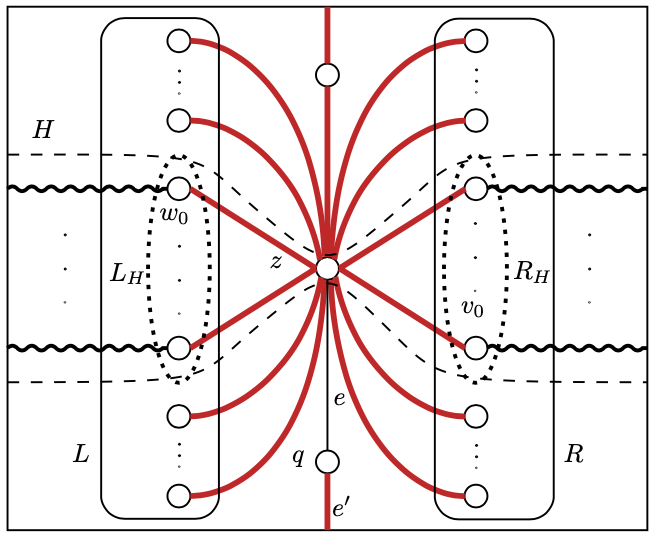}
\end{center}
\caption{The situation in the proof of Theorem \ref{thm:nopropwit}. The tree $T$ is indicated with thickened, while the subgraph $H$ of $G$ is indicated using a dashed line. The vertex sets $L_H$ and $R_H$ are shown within the dotted regions within $H$. For simplicity, we have depicted that the edge $e'$ as an edge of the cycle $C$.}
\label{fig:LRH}
\end{figure}

Consider now the rotor-routing process associated to $\rho_q([(z)-(q)],T)$. At some point, the rotor at $z$ shifts to the edge $\{z,w_0\}$, forming a unique cycle in the subgraph of $H$ defined by the rotors at vertices of $H$. Restricting the rotor configuration to the graph $H$, we obtain a unicycle $U=(\sigma,z)$ in the graph $H$. Further, by the assumption that $G$ has no proper witness, we see that all paths from any vertex of $H$ other than $z$ to any vertex on the path $C$ must pass through $z$. In particular, combining this with the fact that $(C,P)$ is a tight witness pair ensures that no rotors outside of $H$ or the subgraphs $W$ will move until the rotor at $z$ shifts to $e$. 
 
Consider now the rotor routing process in the graph $H$ beginning from starting configuration $U$. Then by Lemma \ref{lem:rotations}, for all neighbors $v$ of $z$ in $R_H$, the rotor at $z$ shifts to $\{z,v\}$ before the rotor at any $v\in R_H$ completes a full rotation. In particular, we necessarily obtain that the rotor at $z$ reaches $\{z,v_0\}$ before the rotor at any $v\in R_H$ completes a full rotation. Considering the implication of this statement in the graph $G$, we see that the rotor at $z$ must shift to $e$ before the rotors at any $v\in R_H$ complete a full rotation. 

We now see that the rotor routing process associated to $\rho_q([(z)-(q)],T)$ terminates before the rotor of any vertex in $R_H$ completes a full rotation. In particular, this means that in the tree $T':=\rho_q([(z)-(q)],T)$ does not contain any of the edges connecting $z$ to $R_H$. 

Now consider the Bernardi process associated to $T'$. By the above deliberation, in the Bernardi tour associated to $\beta_{(q,e)}(T')$, the vertex $z$ receives a chip from at least one neighbor in $R_H$ in addition to the chip it receives from the edge $\{z,x\}$. By the assumption that $G$ has no proper witness, this means that the break divisor $\beta_{(q,e)}(T')$ takes a value of at least two at $z$. 

On the other hand, by the construction of the tree $T$, the break divisor $\beta_{(q,e)}(T)$ takes the value 0 at $z$. This is because all edges incident to $z$, except $e$, are included in the tree $T$. It immediately follows that the divisor $(z)-(q)+\beta_{(q,e)}(T)$ takes the value 1 at $z$.  

Further we note that this divisor is a break divisor; in fact $(z)-(q)+\beta_{(q,e)}(T)=\beta_{(q,e')}(T)$. We have deduced that we have an inequality of break divisors
	\[\beta_{(q,e')}(T)\neq\beta_{(q,e)}(T')\]
Thus the rotor-routing and Bernardi torsors disagree on the tree $T$, so that $\rho_q\neq\beta_q$. 
\end{proof}

Combining the results of Theorem \ref{thm:propwit} and Theorem \ref{thm:nopropwit}, we obtain the following Corrollary, verifying the Baker-Wang conjecture.

\begin{Corollary}[Baker-Wang Conjecture]
Given a nonplanar ribbon graph with no multiple edges and no loops, there exists a vertex $q$ for which $\rho_q\neq\beta_q$.
\end{Corollary}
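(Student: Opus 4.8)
The plan is to obtain the statement immediately from the dichotomy furnished by Theorem \ref{thm:propwit} and Theorem \ref{thm:nopropwit}, using only the fact that a nonplanar ribbon graph carries, by definition, a nonseparating cycle and hence a witness pair. First I would recall the characterization of nonplanarity from Section \ref{graphs}: a connected nonplanar ribbon graph $G$ contains a nonseparating cycle $C$, and every nonseparating cycle comes equipped with a witness path $P$, so $G$ admits at least one witness pair $(C,P)$. The argument then splits according to whether some such witness pair can be chosen to be proper, in the sense of Definition \ref{def:tight}'s surrounding discussion.

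In the first case, $G$ admits a proper witness pair. Then Theorem \ref{thm:propwit} applies directly — note that its hypotheses place no restriction on edge multiplicities — and produces a vertex $q$ with $\rho_q \neq \beta_q$. In the second case, $G$ admits no proper witness pair; since $G$ is moreover assumed to have no multiple edges and no loops, the hypotheses of Theorem \ref{thm:nopropwit} are exactly satisfied, and that theorem yields the desired vertex $q$. The two cases are mutually exclusive and exhaustive, so in either case we are finished.

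The only point demanding attention is not the case split itself but verifying that the two theorems jointly cover precisely the class in the statement: Theorem \ref{thm:propwit} must be used in full generality (the proper-witness branch imposes no simplicity assumption), while Theorem \ref{thm:nopropwit} is tailored exactly to the loopless simple graphs arising in the no-proper-witness branch. One should also observe that "nonplanar ribbon graph" here means "$G$ contains a nonseparating cycle," which is weaker than the underlying graph being nonplanar in the Kuratowski sense; in particular the present Corollary is genuinely stronger than the earlier one deduced from Proposition \ref{prop:nonplanar}, even though its proof is shorter, since Proposition \ref{prop:nonplanar} is not needed once we are already handed a witness pair. Beyond this bookkeeping, no further argument is required.
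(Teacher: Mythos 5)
Your proposal is correct and follows essentially the same route as the paper: the Corollary is obtained by splitting on whether $G$ admits a proper witness pair and invoking Theorem \ref{thm:propwit} in the first case and Theorem \ref{thm:nopropwit} in the second, exactly as the paper does when it says the result follows by combining those two theorems. Your additional remarks on the scope of ``nonplanar ribbon graph'' are accurate but not needed for the argument.
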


\section{Examples}\label{examples}

We conclude with two examples to address the following questions
\begin{enumerate}
	\item Is the assumption of no multiple edges necessary for the Baker--Wang Conjecture?
	\item For nonplanar ribbon graphs, is the \textit{difference} between the two torsor structures independent on the base vertex?
\end{enumerate}
Our examples show that the answer to question (1) is yes, while the answer to question (2) is no.

Fix a labeling of the $N$ spanning trees of the underlying graph and to consider the actions $\rho_q$ and $\beta_q$ as homomorphisms
	\[
	\begin{aligned}
	\rho_q:\Pic^0(G) &\longrightarrow S_N \\
	\beta_q:\Pic^0(G) &\longrightarrow S_N
	\end{aligned}
	\]
The \textit{difference} between the two torsors \textit{on a given divisor class} $[D]$ can be computed as $\rho_q([D])^{-1}\beta_q([D])\in S_N$. The difference between the two torsor structures is dependent on the base vertex if there exist two vertices $p$ and $q$ and a divisor class $[D]$ such that 
	\[\rho_p([D])^{-1}\beta_p([D])\neq \rho_q([D])^{-1}\beta_q([D])\]

\begin{figure}
\begin{center}
\includegraphics[width=6.25cm, height=2.75cm]{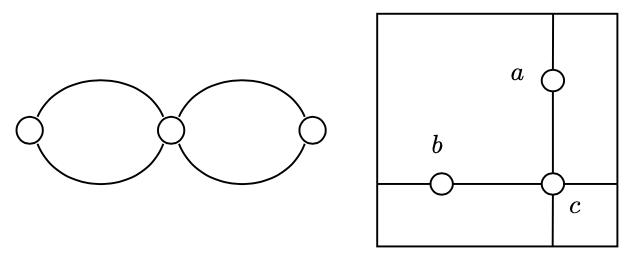} 
\end{center}
\caption{Rounded bowtie graph with the relevant torus embedding. As in prior figures, the ribbon structure is given by the counter-clockwise orientation of the torus.}
\label{fig:roundedbowtie}
\end{figure}

\begin{Example}
Consider the graph depicted in Figure \ref{fig:roundedbowtie}, which we call the \textit{rounded bowtie graph}. For the rounded bowtie graph, we have that $\rho_q=\beta_q$ for all vertices $q$. 

We compute all Bernardi and rotor-routing torsor structures on the graph. This graph has only has four spanning trees, which yields a particularly straightforward collection of computations. 

For a given spanning tree $T$ and a given choice of vertex $q$ and incident edge $e=\{v,q\}$, we first compute the rotor routing action, yielding a permutation $\rho_q([(v)-(q)])\in S_n$. Following this, we calculate the permutation associated to the Bernardi action by computing break divisors $\beta_{(q,e)}(T)$ for all spanning trees $T$. Break divisors will be written as row vectors with coordinates in the order $(a,b,c)$. With these in hand, we translate the break divisors by the divisor $(v)-(q)$, yielding new divisors 
	\begin{equation}
	(v) - (q) + \beta_{(q,e)}(T) \label{translates}
	\end{equation}
For each spanning tree $T$. Finally, we can check for linear equivalence between these divisors and the other break divisors $\beta_{(q,e)}(S)$ for other spanning trees $S$. In particular, if some pair of spanning trees $S$ and $T$ satisfy
	\[(v) - (q) + \beta_{(q,e)}(T) \sim \beta_{(q,e)}(S)\]
then the permutation $\beta_q([(v)-(q)])$ sends the spanning tree $T$ to $S$. Calculating these divisors \eqref{translates} and performing these comparisons for all spanning trees then yields a permutation representation for $\beta_q([(v)-(q)])$. 

In order to verify that $\rho_q=\beta_q$ for all vertices $q$ of the graph in Figure \ref{fig:roundedbowtie}, we perform this process for all choices of vertices $q$ and $v$ of the graph in order to understand the action of divisor classes of $\Div^0(G)$ generators on the trees. There are three non-equivalent choices for $q$ and $v$: one in which the target vertex $q$ has degree four and the vertex $v$ has degree two, one in which $q$ and $v$ both have degree two, and one in which $q$ has degree two and $v$ has degree four. In Figure \ref{fig:bowtiework}, we have included a depiction of the four spanning trees of the rounded bowtie graph, together with their associated Bernardi tours and break divisors for this choice of $q=c$ and $v=a$. 

\begin{figure}
\begin{center}
\includegraphics[width=13.5cm, height=4cm]{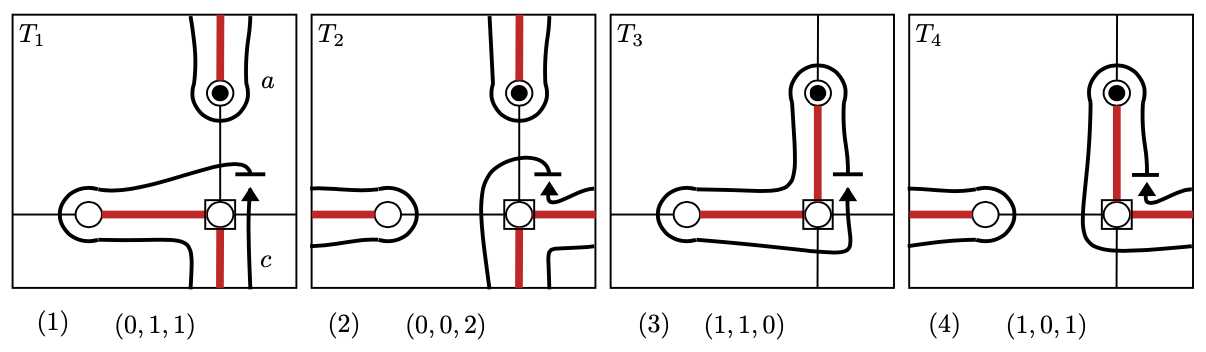} 
\end{center}
\caption{Bernardi and rotor-routing actions on spanning trees of the rounded bowtie graph. The spanning trees $T_i$ are indicated in each of the figures (1) through (4) with thickened edges. The Bernardi tours depicted are associated to the bijection $\beta_{(c,e)}$, where $e=\{a,c\}$. The associated break divisors $\beta_{(c,e)}(T_i)$ are given below each tree.}
\label{fig:bowtiework}
\end{figure}

In calculating the Bernardi action we make use of the Bernardi bijection $\beta_{(c,e)}$, where $e=\{c,a\}$. Consider the divisor $(a)-(c) + \beta_{(c,e)}(T_1)=(1,1,0)$. This is the break divisor $\beta_{(c,e)}(T_3)$, so we conclude that $\beta_c([(a)-(c)])$ maps $T_1$ to $T_3$. Similarly, we see
	
	\[
	\begin{aligned}
	(a) - (c) + \beta_{(c,e)}(T_3) &= (2,1,-1) \sim (0,1,1) = \beta_{(c,e)}(T_1) \\
	(a) - (c) + \beta_{(c,e)}(T_2) &= (1,0,1) = \beta_{(c,e)}(T_4) \\
	(a) - (c) + \beta_{(c,e)}(T_4) &= (2,0,0) \sim (0,0,2) = \beta_{(c,e)}(T_2)
	\end{aligned}
	\]

Together, this implies that we have a permutation representation $\beta_c([(a)-(c)]) = (13)(24)$. Likewise, checking the rotor-routing action $\rho_c([(a)-(c)])$ yields that $\rho_c([(a)-(c)]) = (13)(24) = \beta_c([(a)-(c)])$. By the symmetry of the rounded bowtie graph, it follows that $\rho_c = \beta_c$. Other cases follow similarly and so the associated work is not included.
\end{Example}

We now turn to the issue of the discrepancy between base vertices, illustrating in particular a graph, depicted in Figure \ref{fig:pointedbowtie}, having vertices $p$ and $q$ such that $\rho_p=\beta_p$ while $\rho_q\neq\beta_q$.  

\begin{figure}[b]
\begin{center}
\includegraphics[width=7.25cm, height=3.5cm]{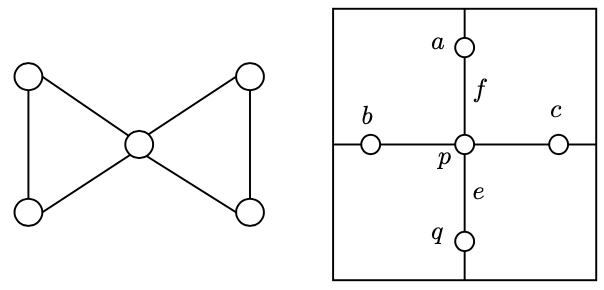} 
\end{center}
\caption{Pointed bowtie graph with the relevant torus embedding. Vertices are labeled $a,b,c,p,q$ and edges $e,f$ are relevant in Example \ref{prop:pq}.}
\label{fig:pointedbowtie}
\end{figure}

\begin{Example}\label{prop:pq}
Consider the graph depicted in Figure \ref{fig:pointedbowtie}, which we call the \textit{pointed bowtie graph}. For the pointed bowtie graph and the two indicated vertices $p$ and $q$ we have $\rho_p=\beta_p$ while $\rho_q\neq\beta_q$. 

We deal with divisor classes of the form $[(z)-(q)]$ for the sake of their simplicity. In Figure \ref{fig:pointedbowtie} all vertices are labeled, and in addition we have included all spanning trees of the graph, together with the labeling to be used in the cycle representation of our actions. We will show that 
	\[\rho_q([(a)-(p)])^{-1}\beta_q([(a)-(p)]) \neq \rho_p([(a)-(p)])^{-1}\beta_p([(a)-(p)]).\]
Before proceeding, note that by Theorem \ref{thm:nopropwit}, we are guaranteed that $\rho_q\neq\beta_q$ for this choice of $q$. We compute now the action of the divisor $[(a)-(p)]$ on each spanning tree using the actions $\rho_q$ and $\beta_q$. Since the base vertex of these actions is $q$ and not $p$, we must first rewrite $[(a)-(p)] = [(a)-(q)] - [(p)-(q)]$, after which one can compute the actions of $[(a)-(q)]$ and $[(p)-(q)]$ under both $\rho_q$ and $\beta_q$. Finally,
	\[
	\begin{aligned}
	\rho_q([(a)-(p)]) &= \rho_q([(a)-(q)]\big[\rho_q([(p)-(q)])\big]^{-1} \\
	\beta_q([(a)-(p)]) &= \beta_q([(a)-(q)]\big[\beta_q([(p)-(q)])\big]^{-1}
	\end{aligned}
	\]

With these permutations in hand, we are able to form the product $\rho_q([(a)-(p)]^{-1}\beta_q([(a)-(p)])$. To start this process, we have produced in Figure \ref{fig:pbowtie_q} the Bernardi tours and break divisors associated to each of the spanning trees. For this example, break divisors will be written as row vectors with coordinates in the order $(a,b,p,c,q)$.With these in hand, we can now compute the cycle representation of the permutation $\beta_q([(p)-(q)])$ using the Bernardi bijection $\beta_{(q,e)}$, where $e$ is the edge $\{p,q\}$. 

Beginning from the spanning tree $T_1$, computation of the translates $[(p)-(q)]+[\beta_{(q,e)}(T_i)]$ and their break divisor representatives yields that the permutation $\beta_q([(p)-(q)])$ has cycle representation $(193)(278)(456)$. On the other hand, it can be verified from performing rotor-routing on the trees $T_1,\ldots,T_9$ that $\rho_q([(p)-(q)])=(123)(459)(678)$. We have
	\[
	\begin{aligned}
	\beta_q([(p)-(q)] &= (193)(278)(456) \\
	\rho_q([(p)-(q)] &= (123)(459)(678) 
	\end{aligned}
	\]

Following this same reasoning, we can obtain cycle representations 
	\[
	\begin{aligned}
	\beta_q([(a)-(q)] &= (139)(287)(465) \\
	\rho_q([(a)-(q)] &= (132)(495)(687) 
	\end{aligned}
	\]
combining the above expressions, it follows that
	\[\rho_q([(a)-(p)])^{-1}\beta_q([(a)-(p)]) = (158)(269)\]
It remains to see that the torsor actions of $\beta_p([(a)-(p)])$ and $\rho_p([(a)-(p)])$ coincide. This is most easily done by taking the same approach as above, utilizing the Bernardi bijection $\beta_{(p,f)}$. These computations are more straightforward than those above, so we leave the work as an exercise. One obtains
	\[
	\begin{aligned}
	\beta_p([(a)-(p)] &= (163)(245)(789) \\
	\rho_p([(a)-(p)] &= (163)(245)(789) 
	\end{aligned}
	\]
In particular, $\rho_p([(a)-(p)])=\beta_p([(a)-(p)])$. Now, note that by the symmetry of the pointed bowtie graph, the coincidence of $\rho_p$ and $\beta_p$ on $[(a)-(p)]$ ensures their coincidence on all generators of $\Div^0(G)$. We now have $\rho_q\neq\beta_q$ while $\rho_p=\beta_p$.

\begin{figure}
\begin{center}
\includegraphics[width=9cm, height=11cm]{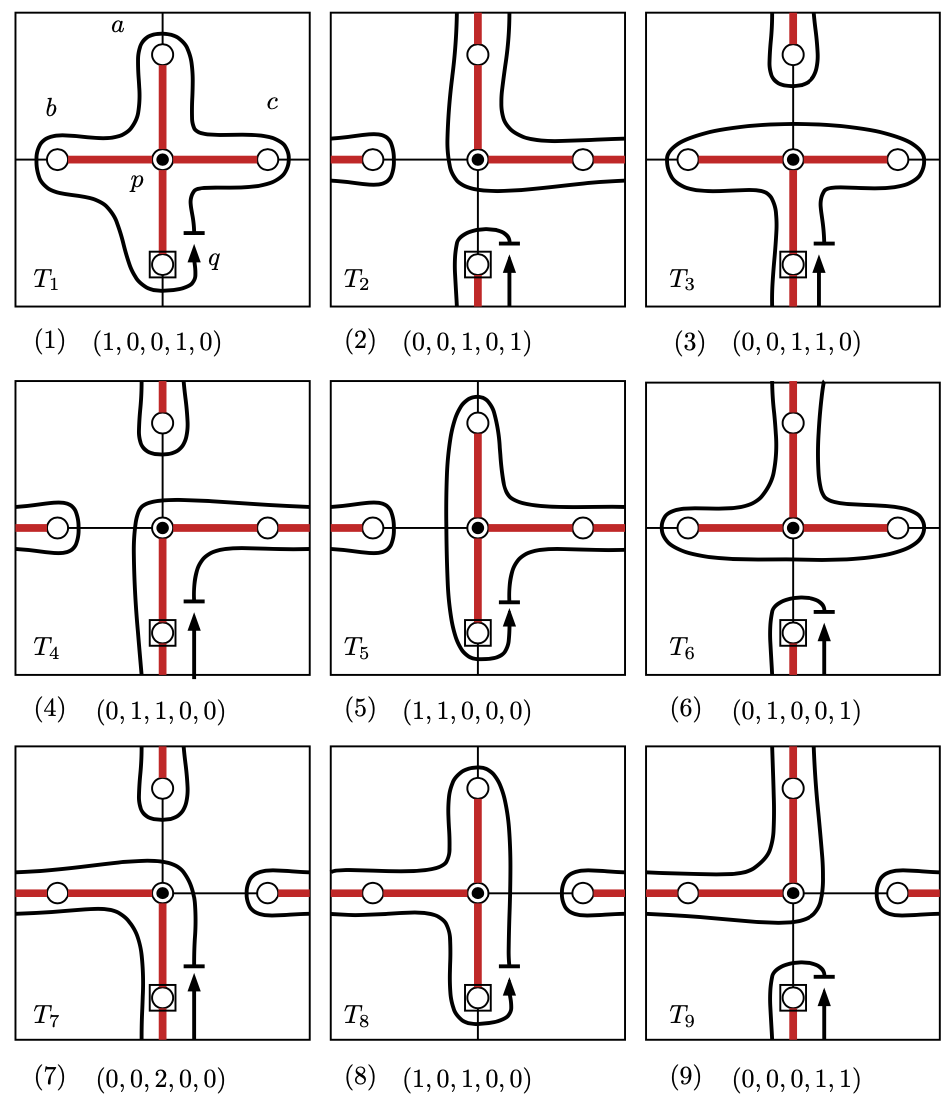} 
\end{center}
\caption{Bernardi and rotor-routing actions on spanning trees of the pointed bowtie graph. The spanning trees $T_i$ are indicated in each of the figures in red. Denoting $e=\{p,q\}$, the break divisors $\beta_{(q,e)}(T_i)$ are given below the corresponding tree with the vertex ordering $(a,b,p,c,q)$.}
\label{fig:pbowtie_q}
\end{figure}

\end{Example}

\bibliography{torsor}
\bibliographystyle{alpha}

\end{document}